\numberwithin{equation}{section}
\DeclareMathOperator{\Var}{Var}
\DeclareMathOperator*{\diag}{diag}
\newcommand{\beq}{ \begin{equation} }
\newcommand{\eeq}{ \end{equation} }
\newcommand{\beqq}{ \begin{equation*} }
\newcommand{\eeqq}{ \end{equation*} }
\newcommand{\ip}[1]{\langle {#1} \rangle }
\def \Z {\mathbb{Z}}
\def \dd {\mathrm{d}}
\def \a {\alpha}
\def \b {\beta}
\def \g {\gamma}
\def \d {\delta}
\def \la {\lambda}
\def \s {\sigma}
\def \r {\rho}
\newcommand{\bR}{\mathbb{R}}
\newcommand{\bN}{\mathbb{N}}
\newcommand{\bE}{\mathbb{E}}
\newcommand{\bP}{\mathbb{P}}
\newcommand{\cF}{\mathcal{F}}
\def \cF {\mathcal{F}}
\def \RR {\mathbb{R}}
\def \cN {\mathcal{N}}
\def \cG {\mathcal{G}}
\newcommand{\ii}{\mathrm{i}}
\newtheorem{theorem}{Theorem}[section]
\newtheorem{lemma}[theorem]{Lemma}
\newtheorem{conjecture}[theorem]{Conjecture}
\newtheorem{definition}[theorem]{Definition}
\theoremstyle{remark}
\newtheorem{remark}[theorem]{Remark}
\DeclareMathOperator{\TW}{TW}
\begin{document}

\title{Free energy fluctuations in SK and related spin glass models: \\ A literature survey}

\author{Elizabeth W. Collins-Woodfin\footnote{Department of Mathematics, University of Oregon,
Eugene, OR, 97403, USA \newline email: \texttt{collinsw@uoregon.edu}} 
\and 
Han Gia Le\footnote{Department of Statistics and Actuarial Science, University of Waterloo, Waterloo, ON, N2L 3G1, Canada \newline email: \texttt{han.le1@uwaterloo.ca}}}

	\date{\today}

	\maketitle

\begin{abstract}
Over the past 50 years, spin glass models have generated a broad range of literature in mathematics, physics, and computer science.  There has been much progress in characterizing and proving the limiting free energy of various models, stemming from the original formulas of Parisi.  Comparatively less is known about the more detailed topic of free energy fluctuations.  
This paper concerns a family of models in which there has been considerable progress on fluctuations, namely the Sherrington-Kirkpatrick (SK) and spherical Sherrington-Kirkpatrick (SSK) models, along with their multi-species analogs.  We present a survey of the literature on free energy fluctuations in these 2-spin models, discussing results from different temperature regimes, with and without an external field, including results on phase transitions.
\end{abstract}	

\section{Introduction}

The Sherrington-Kirkpatrick model \cite{SherringtonKirkpatrick1975} is one of the earliest and most well-studied models in the spin glass literature (see \cite{sherringtonkirkpatrick2025} for a brief history).  While it was initially motivated by physics, it is also interesting from the perspective of optimization and, despite being simple relative to other models, it continues to be a source of open problems.
The SK model and its variants fall within the class of \textit{mean-field}, or infinite-range models, meaning that the interactions between pairs of spin sites have a distribution that is independent of any notion of distance between the sites.  While these models do not mimic the physical reality of particle interactions that decay with distance, they nevertheless exhibit rich mathematical phenomena, as we will see.

\paragraph{Scope of the survey}
The spin glass literature is immense, so the focus of this survey is necessarily narrow, guided by the following question:
\textit{What is known about free energy fluctuations in 2-spin, mean-field models?}
More specifically:
\begin{itemize}
\item The models we discuss \textit{do include} SK and spherical SK, with and without an external field, or with a Curie-Weiss term, but \textit{do not include} $p$-spin or mixed-spin models.
\item Under the umbrella of ``mean-field", we \textit{do include} multi-species models but \textit{do not include} sparse models or models with geometric/graphical structure to their interactions.
\item Within the free energy literature, we focus on \textit{fluctuation results} and only briefly mention seminal works related to the limiting free energy, Parisi formulas, and AT-line.  We briefly discuss overlaps as they relate to free energy, but do not emphasize the literature on overlaps.
\item We focus on the case where the interaction matrix has Gaussian entries.
\item For literature beyond the scope of this survey, we provide a brief list of references in Section \ref{sec:beyondscope}.
\end{itemize}
We have chosen this scope in order to highlight some exciting themes that emerge within the literature on free energy fluctuations of 2-spin, mean field models.  In particular, the discrete and spherical models exhibit strikingly similar behavior at high temperature, but this similarity breaks down as one moves through the critical temperature threshold and into the low temperature regime.  Fluctuations in the SSK model are driven by deep connections to random matrix theory, which persist at all temperatures and provide insight into various phase transitions within this model. In contrast, SK model exhibits a complex energy landscape, in which random matrix methods do not suffice but one needs other techniques such as Gaussian interpolation and cavity methods.

We hope this survey provides an instructive summary of what is known (and what remains open) regarding free energy fluctuations in SK, SSK, and related models.  We look forward to seeing how other scholars will extend and build upon these results!

\subsection{Model set-up}
\paragraph{Sherrington--Kirkpatrick (SK) model} This model in statistical mechanics was introduced in 1975 to study magnetic alloys \cite{SherringtonKirkpatrick1975}.  It involves a spin vector $\s=(\s_1,...,\s_N)^T$ in the configuration space $\{-1,1\}^N$.  For any $\s$, we define the Hamiltonian
\beqq
H_N(\s)=\frac1{2\sqrt N}\sum_{i,j=1}^N g_{ij}\s_i\s_j,
\eeqq
where $(g_{ij})_{i,j=1}^N$ are independent standard Gaussian random variables. Equivalently, one can write $H_N(\s)=\frac12\s^TM\s$ where $M$ is a random matrix from the Gaussian Orthogonal Ensemble (GOE)\footnote{A GOE matrix $M$ is symmetric with independent entries (up to symmetry) where $M_{ij} \sim \cN(0,N^{-1}(1+\delta_{ij})).$}.  Using this Hamiltonian, one can define the \textit{Gibbs measure} on the configuration space, 
\[G(\s)=\frac{\exp(\b H_N(\s))}{Z_N}\qquad \text{where }Z_N:=\sum_{\s \in \{-1,1\}^N} \exp\left(\b H_N(\s)\right). \]
The \textit{free energy} of the model at each inverse temperature $\b>0$ is defined as 
\beqq
F_N(\b) = \frac1N\log Z_N.
\eeqq
As $N\to\infty$, the free energy $F_N(\b)$ converges almost surely to some $F(\b)$.  This limiting free energy is known for every $\b>0$, established in breakthrough works \cite{Parisi1979, Parisi1980} and proven rigorously in \cite{Guerra03, TalagrandSK, Panchenk14}.  The fluctuations are also known in the high temperature regime $\b<1$, but remain open for the low temperature regime.  The fluctuation results will be discussed in Section \ref{sec:SK}. For the fluctuations of this model and other models of this survey, we focus particularly on the large-$N$ asymptotics.

\paragraph{Spherical Sherrington--Kirkpatrick (SSK) model} The SK model has a continuous analog, SSK, in which the configuration space is the hypersphere of radius $\sqrt{N}$ in $\RR^N$, i.e.
\[
S_{N-1}:=\left\{\s\in\RR^N:\|\s\|=\sqrt{N}\right\}
\]
This is a natural extension of SK to continuous space, since $S_{N-1}$  embeds the SK configuration space $\{\pm1\}^N$.  The Hamiltonian and free energy for SSK take the same definitions as for SK except that the partition function $Z_N$ is now an integral rather than a finite sum; i.e.,
\[
Z_N:=\int_{S_{N-1}}\exp\left(\b H_N(\s)\right)\dd\omega_N(\s),
\]
where $\omega_N$ is the uniform measure on $S_{N-1}$.  The SSK model was first introduced by Kosterlitz, Thouless, and Jones \cite{kosterlitz1976spherical}. They provide a non-rigorous computation of its limiting free energy, which is extended to a broader class of models in \cite{crisanti1992sphericalp} and rigorously proven in \cite{TalagrandSSK} and \cite{Chen13}.  The fluctuations have also been proved in the high and low temperature regime as well as the critical temperature threshold (see Section \ref{sec:SSK}).

\paragraph{Ground state energy} Closely related to the free energy is the ground state energy, which is defined as 
\beq
\max_{\s\in\Sigma_N}\frac1NH_N(\s)\quad\text{or equivalently }\quad\lim_{\b\to\infty}\frac1\b F_N(\b)
\eeq
where $\Sigma_N$ denotes $\{\pm1\}^N$ (resp.\@ $S_{N-1}$) in the case of SK (resp.\@ SSK) and $F_N(\b)$ is the free energy of the corresponding model.  Because of these two equivalent definitions, the ground state energy can be viewed as the free energy of the model at zero temperature and the free energy can be viewed as a positive temperature relaxation of a constrained optimization problem. 

\paragraph{SK (or SSK) with an external field} 
Both the SK and SSK model can include an external magnetic field.  In this type of model the Hamiltonian becomes
\beq
H_N(\s)=\frac{1}{2\sqrt{N}}\sum_{i,j=1}^Ng_{ij}\s_i\s_j+h\sum_{i=1}^Nv_i\s_i
\eeq
where $v=(v_1,...,v_N)$ is called the external field vector and can be chosen to be either random or deterministic.  The properties of the model are similar with either choice, provided the vector is independent from the disorder variables $\{g_{ij}\}$.

\paragraph{SK (or SSK) with ferromagnetic/Curie-Weiss interaction} 
Another variation of the standard SK or SSK model is the model with ferromagnetic Curie-Weiss interaction.  Instead of the linear external field term discussed above, the Curie-Weiss term is quadratic in the spin variables.  More specifically, 
\beq\label{eq:CW}
H_N(\sigma)=\frac1{2\sqrt{N}}\sum_{i,j=1}^Ng_{ij}\sigma_i\sigma_j+\frac{J}{2N}\sum_{i,j=1}^N\sigma_i\sigma_j
\eeq
where $J$ is called the \textit{coupling constant}.  This model exhibits three phases depending on the temperature and coupling constant (see Figure \ref{fig:phasediagram}).  The phases are: spin glass, paramagnetic, ferromagnetic, which correspond to the cases in which $\max\{1,\b^{-1},J\}$ is equal to $1,\b^{-1},J$ respectively.  Note that the previously mentioned high- and low-temperature regimes of the standard SK or SSK model (with $J=0$) fall within the paramagnetic and spin glass phases respectively.
\begin{figure}[H]
\centering
\includegraphics[width=.3\textwidth]{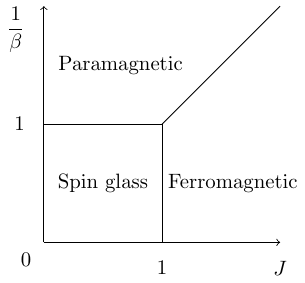}
\caption{Phase diagram for SSK with ferromagnetic Curie-Weiss interaction, adapted from \cite{BLW18}.}
\label{fig:phasediagram}
\end{figure}

\paragraph{Multi-species models} The definitions and discussion of the literature for these models are deferred to Section \ref{sec:multispecies}.

\paragraph{Conventions}
With respect to definitions and notations, there are a variety of conventions that are not entirely consistent within the spin glass literature.  We summarize here the conventions that we choose in this paper.  When we cite theorems from other papers, we translate their results to match our conventions.  In most cases, the choice of convention is simply a rescaling and does not substantively change the results. We highlight any cases in which the choice of convention matters.
\begin{itemize}
\item \textit{Scaling of inverse temperature} -- We define the Hamiltonian as $H_N(\s)=\frac1{2\sqrt N}\sum_{i,j=1}^N g_{ij}\s_i\s_j$.  In contrast, some papers omit the factor $\frac12$, using the definition $H_N(\s)=\frac1{\sqrt N}\sum_{i,j=1}^N g_{ij}\s_i\s_j$.  This amounts of a change of variable for $\b$ in the formula of $Z_N$, such that the critical inverse temperature is $\b=1$ with our definition but $\b=\frac12$ with the other definition.
\item \textit{Scaling of free energy (with respect to $N$)} -- We define the free energy as $F_N(\b)=\frac1N\log Z_N$.  In contrast, some papers omit the factor $\frac1N$, using the definition $F_N(\b)=\log Z_N$.  In our scaling, the free energy has order 1 and the fluctuations of the free energy have order between $N^{-1}$ and $N^{-1/2}$, depending on the model regime (variance order between $N^{-2}$ and $N^{-1}$).  In literature using the other convention, the free energy is of order $N$ and the fluctuations are scaled accordingly.
\item \textit{Scaling of free energy (with respect to $\b$)} -- 
In contrast to our definition $F_N(\b)=\frac1N\log Z_N$, some papers rescale the free energy by $\b$, using the definition $F_N(\b)=\frac{1}{\b N}\log Z_N$.  That definition can be useful when one is interested in the asymptotics of the free energy as $\b$ goes to 0 or $\infty$. In the finite, positive $\b$ regime that we consider, it is simply a rescaling.
\item \textit{Scaling of ground state energy} -- We define the ground state energy as $\max_{\s\in\Sigma_N}\frac1NH_N(\s)$.  In contrast, papers that omit the factor $\frac1N$ in the definition of free energy also omit it in the definition of ground state energy.
\item \textit{Diagonal of the disorder matrix} -- We take the disorder variables $\{g_{ij}\}$ in the Hamiltonian to be standard normal random variables for all pairs $(i,j)$.  Some papers take the diagonal entries $\{g_{ii}\}$ to be zero.  This choice is motivated by the physics context in which it does not make sense to define the interaction of a particle with itself.  The effect of these entries is negligible in the asymptotics, so the results are equivalent for either convention.  However, setting all $g_{ij}$ to be standard Gaussian random variables allows one to directly apply theorems about GOE from the random matrix literature.
\item \textit{External field vector} -- As mentioned above, the external field vector can be take to be random (e.g. standard Gaussian) or deterministic (e.g. all-one vector) or even a sum of deterministic and random vectors.  We do not fix a convention here, but specify it in each theorem.
\end{itemize}

\subsection{Some foundational results}

Before discussing fluctuation results for the free energy of SK and SSK models, we briefly touch on the rich history of their limiting free energies. These two models are said to have \textit{self-averaging} free energy, meaning that $F_N$ stays bounded (below and above) and the fluctuations diminish as $N \to \infty$. In many cases, convergence of the disorder expectation $\bE F_N$ is demonstrated by providing an explicit description of its limit. For Ising spins, the celebrated Parisi formula, proposed by Parisi \cite{Parisi1979, Parisi1980}, states that the limiting free energy is the infimum of a functional, over all probability measures supported on $[0,1]$. The formula is stated for general \textit{mixed $p$-spin models} (see Section \ref{sec:beyondscope}), and mathematical breakthroughs proving the formula are due to Talagrand \cite{TalagrandSK} and Panchenko \cite{Panchenk14}. In the special case of the SK model, the functional has an explicit expression that involves a solution of a PDE, to be solved backward in time with space-time parameter in $\bR \times [0,1]$, and initial condition at $t=1$. An analogous formula for the spherical model was developed by Crisanti and Sommers \cite{crisanti1992sphericalp}, and mathematical proofs of the Parisi formula in this case are due to Talagrand \cite{TalagrandSSK} and Chen \cite{Chen13}. In the special case of the SSK model, we have a closed-form expression (see Eq.\@ \eqref{eq:limitingF_SSK}).

After the development of the Parisi formula, a central question to the study of spin glasses concerns minimizers of the Parisi functional. Existence and uniqueness of the minimizer, known as the \textit{Parisi measure}, is highly non-trivial, especially for discrete spins \cite{AC15_unique}. Another important question concerns the temperature-external field phase transition of the Parisi measure $\mu$, leading to the topic of replica symmetric versus replica symmetric breaking regimes. The model is said to be in the \textit{replica symmetric} (RS) phase if $\mu$ is a singleton, and is said to have \textit{replica symmetric breaking} otherwise. Within RSB, we have $k$-step Replica Symmetry Breaking (kRSB) if $\mu$ has $k+1$ atoms and full Replica Symmetry Breaking (FRSB) if $\mu$ has an absolutely continuous part. 

Under zero external field, the SK model is shown to be RS at high temperature $0<\b <1$ by \cite{AizenmanLebowitzRuelle} and is not RS at low temperature $\b>1$ by Toninelli \cite{Toninelli_2002}. A recent result of Zhou \cite{Zhou25} shows that at temperatures slightly below the critical value, i.e. $1 < \b <1 +\eta$ for some $\eta>0$, the system is in the FRSB phase. The picture gets more complicated under positive external field for the SK model. In this case, \cite{Toninelli_2002} shows that the system exhibits RSB when $(\b, h)$ is such that
\[
\b^2 \bE \left[\frac1{\cosh^4(\b z\sqrt{q} +h)}\right]>1,
\]
where the expectation is with respect to the standard Gaussian random variable $z$ and $q$ is the solution to $q = \bE \tanh^2(\b z\sqrt{q} +h)$. The equality case $
\b^2 \bE \left[\frac1{\cosh^4(\b z\sqrt{q} +h)}\right]=1$ is known as the AT line, named after de Almeida and Thouless \cite{dAT_1978}. For more discussions on the AT line and RSB regime, see \cites{Toninelli_2002,JT_2017,BY_2022, Bolthausen14, Bolthausen19} (single-species) and \cites{BatesSlomanSohn19, DeyWu21, Kim25} (multi-species) for the discrete case and \cite{JT_2018,BatesSohn22b}, for example, for the spherical case. There has also been work studying the geometry of the Gibbs measure, including the shattered phase, which is a subset of the RS phase \cite{Subag_2017, BAJ_2024, GJK_2025, ElAMS_2025, crisanti1993spherical}. In addition, a recent work \cite{Sellke_marginalstability} considers spherical spin glass models at low temperature and draws a connection between the near-global energy maxima and the full RSB property at zero temperature.

The Parisi measure is closely connected to the \textit{order parameter}, and studies of the RS and RSB regimes have also been done in terms of the order parameter (e.g., \cites{BatesSlomanSohn19}). In mean-field models such as SK and SSK, the order parameter refers to the limiting distribution of the absolute value $|R_{1,2}|$, where $R_{1,2}$ denotes the replica overlap
\[
R_{1,2} := \frac1N \sum_{i=1}^N \s^1_i\s^2_i
\]
between two independent samples $\s^1, \s^2$ from the Gibbs measure. For the SSK model, the order parameter is a single-point mass at all temperatures, regardless of external field strength. Thus, we say the model is RS in the whole parameter space. 

It is expected that the Parisi measure and the order parameter coincide, yet at the moment, this is verified only for a certain class of mixed $p$-spin models (which does not include the 2-spin or any pure $p$-spin SK model). For further discussion on the two quantities, see \cite{AC15, AC15_unique, ACZ_2020}.

\subsection{Brief remarks on literature beyond this survey}\label{sec:beyondscope}

Much of the recent work in mean-field spin glasses has focused, not on the SK or SSK model, but on their $p$-spin and mixed spin generalizations.  The Hamiltonian for a pure $p$-spin model is
\[H_{N,p(\s)}=N^{-\frac{p-1}{2}}\sum_{i_1,...,i_p=1}^Ng_{i_1,...,i_p}\s_{i_1}\cdots\s_{i_p}\]
where SK and SSK are pure 2-spin models.  A mixed-spin model has Hamiltonian $H(\s)=\sum_{p=1}^\infty \b_pH_{N,p}(\s)$ for coefficients $\{\b_p\}$ with a suitable decay rate (a constraint typically imposed is $\sum_{p=1}^\infty2^p\b_p^2< \infty$).

In this more generalized setting, the limiting free energy does not have a closed-form formula such as for SSK and SK (at high temperature). Rather, the Parisi formula (or the Crisanti-Sommers formulation for the spherical case) provides a variational representation of $F(\b)$. In addition, formulating the limiting free energy in the general multi-species setting is an area of continuing research. A Parisi formula for the multi-species SK model is obtained in \cites{Barra15, Panchenko15}, and the Crisanti-Sommers-type expression for (mixed $p$-spin) multi-species SSK model is derived in \cite{BatesSohn22}. Both of these results rely on the assumption of convexity of the \textit{mixture polynomial} for the proof of the upper bound. In addition, \cite{Subag23} computes the limiting free energy for the (pure $p$-spin) multi-species spherical SK model via the so-called TAP approach. The statement relies on the assumption of convergence of certain free energies, instead of the convexity assumption. In the case of ground state energy, \cite[Theorem 1.5]{HuangSellke_topologicaltrivialization} implies a closed-form formula for the multi-species spherical SK model at all external field strengths with no convexity assumptions (see, e.g.\@, \cite{SubagZeitouni_2017} for an example of deriving the ground state energy from such a complexity result). 

While this survey focuses on the free energy fluctuation results in the SK and SSK models ($p=2$), there has also been progress on fluctuations for $p>2$ in the RS phase. See, e.g., \cite{Bovier_rem, Bovier_twoscales, JLM_tensorPCA}.  The fluctuations of the ground state energy for general spherical models have also been characterized using the replica method from physics \cite{Fyodorov_groundstateLDP}. In addition to free energy fluctuation results, there has also been work on the fluctuations of overlaps.  See, e.g., \cite{Baiketal21,CW21,landon2020fluctuations,LS22,nguyen2018central}. The fluctuation results in these works are with respect to the Gibbs measure and hold for fixed disorder. They are all related to SSK and are proven using a contour integral technique. Treatment of overlaps of SK is done in \cite[Section 1.10]{Talagrandvol1}.

Another important line of research is related to complexity of the energy landscape, which refers to the number of critical points (of various orders) of the Hamiltonian.  Much of the progress in this area stems from the seminal work of Auffinger, Ben Arous, and \v Cern\'y in the spherical setting \cite{Auffinger13,AuffingerBenArousCerny}.  See \cite{BenArousSubagZeitouni,AuffingerChen_energylandscape,Beliusetal_triviality,FyodorovLeDoussal2018,Subag_complexityspherical, HuangSellke_topologicaltrivialization,Kivimae_complexity, JT_2017lowtemp, Sellke_marginalstability,Fyodorov_complexity} and the references therein for recent progress in this area.

The limiting free energy and fluctuations of the free energy are \textit{static questions} about spin glass models, but one can also consider \textit{dynamical questions} related to how a spin distribution evolves under a particular algorithm (e.g., Glauber dynamics for SK or Langevin dynamics for spherical models).  Key ideas that are studied in relation to dynamics include mixing times \cite{BAJ_2018spectralgap,BAJ_2024,EldanKoehlerZeitouni_mixing, BB_LSI, GheissariJagannath_sphericaldynamics, AnariKoehlerVuong_localization} and aging \cite{BenArousGun, Gayrard_rem, CW_rem}. Some foundational works on spin glass dynamics include \cite{CugliandoloKurchan93, crisanti1993spherical, Grunwald96, BADG_CKdynamics}. For a small sampling of additional works on these topics, see \cite{Dandietal_dynamics, Sellke_langevin,BAGJ_sphericaldynamics,HuangSellke_optimizationspherical} and the references therein. 

Although the results discussed in this survey are presented in the case of Gaussian interaction matrix, many of them are extended to a broader class of matrices (e.g.,\@ Wigner matrices). However, there are also works examining other types of distributions, such as heavy-tailed or deformed matrices \cite{ChenKimSen25, KimLee24, LeeLi23}. 

Finally, there is wealth of literature beyond the context of mean-field spin glasses, related to spin systems with nearest-neighbor or graph structure.   Other models beyond the scope of this survey include diluted spin models, vector-valued spins, Potts model, and many more.  Such models have been studied not only in the mathematical literature, but also in physics and computer science.  Our literature review is inevitably incomplete due the limited scope of our paper and the vast amount of literature in this area.  We apologize in advance to anyone whose work we have omitted and welcome the reader to reach out with additional references that may be of interest.

\subsection{Organization of the paper}
The remainder of this paper is split into three sections with Section \ref{sec:SK} focusing on the SK model, Section \ref{sec:SSK} on the SSK model, and Section \ref{sec:multispecies} on multi-species models (both discrete and spherical).

\section{Free energy fluctuations in the SK model}\label{sec:SK}

In this section, we discuss fluctuation results for the free energy of the SK model, where spins take value $+1$ or $-1$. As a prototypical model of spin glasses, the SK model has been the subject of an extensive amount of work.

We first consider the model with no external field, before presenting the literature for the model under the presence of one. In some cases, the situation actually becomes simpler when there is an external field. The external field term acts as a bias in one direction, breaking the symmetry among the spin configurations, which provides an analytical advantage.

\subsection{Results for SK model without external field}\label{subsec:sknoext}

\paragraph{High temperature results.} After the SK model was introduced in 1975,  Aizenman, Lebowitz and Ruelle showed in 1987 that, in the high-temperature regime, the free energy fluctuations are asymptotically Gaussian \cite{AizenmanLebowitzRuelle}.
The results were shown for i.i.d.\@ interaction coefficients $(J_{ij})_{1 \leq i<j \leq N}$ where $J_{12}$ is symmetric about zero, has moments of all orders, and satisfies  $\bE \exp(\a J_{12})<\infty$ for some $\a>0$. In the Gaussian set-up of this survey, the result is as follows.
\begin{theorem}[SK at high temperature \protect{\cite[Proposition 2.2]{AizenmanLebowitzRuelle}}]\label{thm:ALR}
    For all $\b<1$, we have the convergence in distribution as $N \to \infty$
    \[
    N\left(F_N(\b) - \left(\log 2 + \b^2/4\right)\right) \to X, 
    \]
    where $X \sim \cN(-\frac12 \s^2, \s^2)$ and $\s^2=-\frac12\log(1-\b^2)-\frac{\b^2}{2}$.
\end{theorem}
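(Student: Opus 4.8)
The plan is to prove the stated central limit theorem by comparing $\log Z_N$ to the logarithm of its annealed mean. First I would fix the centering by a direct Gaussian moment computation: in the symmetric, zero-diagonal convention that reproduces the stated variance (i.e. $g_{ij}=g_{ji}$ and $g_{ii}=0$, matching the off-diagonal setup of \cite{AizenmanLebowitzRuelle}; note that retaining a Gaussian diagonal would inject an extra $O(1)$ fluctuation and alter $\s^2$), each $H_N(\s)$ is centered Gaussian with $\Var(H_N(\s)) = (N-1)/2$, whence $\E Z_N = 2^N\exp(\tfrac{N\b^2}{4}+o(1))$ and $\tfrac1N\log\E Z_N \to \log 2 + \b^2/4$. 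It then suffices to study the mean-one variable $W_N := Z_N/\E Z_N$ and to prove $\log W_N \to \cN(-\tfrac12\s^2,\s^2)$.

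A natural first attempt is the method of moments. Writing the replica overlap $R_{a,b} = \tfrac1N\sum_i\s^a_i\s^b_i$ and using $\E[H_N(\s^a)H_N(\s^b)] = \tfrac12(NR_{a,b}^2 - 1)$, a Gaussian computation gives $\E[W_N^k] = \E_{\s^1,\dots,\s^k}\exp\big(\tfrac{\b^2}{2}\sum_{a<b}(NR_{a,b}^2-1)\big)$. Since the rescaled overlaps $\sqrt N R_{a,b}$ of independent uniform sign vectors jointly converge to independent standard Gaussians and $\E[e^{tZ^2}]=(1-2t)^{-1/2}$ for $t<\tfrac12$, each $\E[W_N^k]$ converges, precisely when $\b<1$, to $e^{\binom k2\s^2}$ — exactly the $k$-th moment of the conjectured log-normal. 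This confirms the candidate limit and pins down the role of $\b<1$, but it cannot by itself prove the theorem: the log-normal law is not determined by its moments, so matching every moment is insufficient for convergence in distribution.

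To genuinely identify the limit I would instead establish a central limit theorem for $\log W_N$ directly. The key representation is $W_N = \E_\s[\xi_\s]$ with $\xi_\s = \prod_{i<j}\exp(\tfrac{\b}{\sqrt N}g_{ij}\s_i\s_j - \tfrac{\b^2}{2N})$, a uniform average over $\s$ of products of independent mean-one edge factors. Expanding the factors and averaging over $\s$ (using $\E_\s[\s_i\s_j]=0$) produces a high-temperature cluster expansion whose surviving terms are even subgraphs of $K_N$, dominated as $N\to\infty$ by disjoint unions of simple cycles. Taking logarithms isolates the connected (single-cycle) contributions, so that $\log W_N = \sum_{k\ge2} T_k^{(N)} + (\text{error})$, where $T_k^{(N)}$ is the normalized sum over $k$-cycles of $K_N$ of the products of their edge weights. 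A joint central limit theorem for these cycle statistics — closely related to the trace/linear-eigenvalue-statistics CLT for the Wigner matrix $\tfrac{1}{\sqrt N}(g_{ij})$ — shows that the $T_k^{(N)}$ become asymptotically independent Gaussians with variances $\tfrac{\b^{2k}}{2k}$. Summing over $k\ge2$ yields the total variance $\sum_{k\ge2}\tfrac{\b^{2k}}{2k} = -\tfrac12\log(1-\b^2)-\tfrac{\b^2}{2} = \s^2$, while the mean is fixed to $-\tfrac12\s^2$ by the constraint $\E W_N = 1$.

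The main obstacle is the gap flagged above: the bare moment method is a trap because the limiting law is not moment-determined, so the technical heart lies in the expansion and its control. Concretely, one must show that non-cyclic even subgraphs and cycles sharing vertices contribute negligibly — this is where $\b<1$ is used quantitatively, to guarantee convergence of the relevant series and summability over cycle lengths $k\ge2$ — and one must prove the joint convergence of $\{T_k^{(N)}\}$ to independent Gaussians together with negligibility of the tail $\sum_{k>K}$ uniformly in $N$. Establishing these two ingredients, the combinatorial control of the expansion and the multivariate CLT for cycle counts, is the crux of the proof.
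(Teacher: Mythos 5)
Your proposal follows essentially the same route that the paper attributes to Aizenman--Lebowitz--Ruelle: you correctly identify that the moment method applied directly to $Z_N/\E Z_N$ cannot close the argument (the log-normal limit is not moment-determined), and you instead run the high-temperature cluster expansion, reduce $\log(Z_N/\E Z_N)$ to the cycle statistics $T_k^{(N)}$, and invoke a joint CLT for these with variances $\b^{2k}/(2k)$, whose sum over $k\ge2$ reproduces $\s^2=-\tfrac12\log(1-\b^2)-\tfrac{\b^2}{2}$, with $\b<1$ controlling the non-cycle and tail contributions --- exactly the structure of the paper's argument. One bookkeeping caveat: in the zero-diagonal convention $\log\E Z_N = N\log 2+\tfrac{(N-1)\b^2}{4}$ exactly, so the gap between $\tfrac1N\log\E Z_N$ and the stated centering $\log 2+\b^2/4$ is $-\b^2/(4N)$, which after multiplying by $N$ is an $O(1)$ (not $o(1)$) shift of the limiting mean and should be tracked explicitly rather than absorbed into the $o(1)$ in your formula for $\E Z_N$.
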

\begin{remark}
Theorem \ref{thm:ALR} is stated using the convention that the diagonal entries of the interaction matrix are zero. If the interaction matrix is a GOE matrix, the expression of $\s^2$ becomes $\s^2 = -\frac12\log(1-\b^2)$. For more details, see \cite{AKJ_2020} for the Ising spins and \cite{BL17} for the spherical case.
\end{remark}

The main result in \cite{AizenmanLebowitzRuelle} is stated as the convergence of $\Z_N/\bE Z_N$ to a log-normal random variable $T$, i.e., $T = \exp(u - \frac12 \bE u^2)$ for a centered Gaussian random variable $u$. The moment method is not applied directly to this ratio, as log-normal random variables are not uniquely determined by their moments. Instead, the proof is obtained via cluster expansion, which works for general symmetric disorder. In this approach the configuration space is decomposed into subgraphs of an underlying graph. First, the partition function is written as
\[
Z_N = 2^N \left(\prod_{i<j} \cosh(\b J_{ij}/ \sqrt{N}) \right) \hat{Z}_N, \quad 
\hat{Z}_N = \sum_{\Gamma} \prod_{b \in E(\Gamma)} \tanh(\b J_b/\sqrt N),
\]
where the sum is taken over all graphs $\Gamma$ of a specified form, and the product is taken over all edges of $\Gamma$. The next step is to show that the main contribution to $\hat{Z}_N$ comes from clusters of cycles, and the fluctuations are asymptotically Gaussian via moment method. The high-temperature condition $\b<1$ guarantees that the contribution from large graphs decays exponentially in graph size.

Other methods have also been used to prove the Gaussian fluctuations at high temperature, such as stochastic analysis \cite{FrohlochZegarlinski, CometsNeveu} and the moment method \cite{Talagrandvol2}. In \cite[Chapter 11]{Talagrandvol2}, Talagrand refines the above Gaussian fluctuation result, providing an upper bound on the second-order term in the expansion of the moments of the quantity $N(F_N(\b) - F(\b)) - \frac14 \log(1-\b^2)$. Using one-variable calculus and Gaussian integration by parts, the author derives a recurrence on moments of $N(F_N(\b) - F(\b))$, which allows for recovering the moments via simple induction and integration. 

In \cite{CometsNeveu}, Comets and Neveu use a stochastic analysis approach, where they consider the process $\{\tilde{Z}_N(t)\}_{t \in [0,1]}$ given by
\[
\tilde{Z}_N(t):= \frac1{2^N}\sum_\s e_N(t, \s),
\]
where
\[
e_N(t,\s):= \exp \left[\frac1{\sqrt N}\sum_{i<j}B_{ij}(t)\s_i\s_j - \frac{(N-1)t}{4}\right].
\]
The sum on the right hand side resembles the Hamiltonian of the SK model, with the Brownian motions at time $t$ replacing the Gaussian interactions. Indeed, this process $\tilde{Z}_N(t)$, at $t =\b^2$, is related to the partition function $Z_N(\b)$ of the SK model via
\[
\tilde{Z}_N(\b^2) \stackrel{d}{=} Z(\b)/\bE Z(\b).
\]
On the other hand, $\tilde{Z}_N(t)$ is the exponential martingale: 
\beq\label{eq:expM}
\tilde{Z}_N(t) = \exp \left[M_N(t)-\frac12\ip{M_N}(t)\right],
\eeq
where $\{M_N(t)\}_{t \in [0,1]}$ is a martingale given by
\[
M_N(t)= \int_0^t \dd \tilde{Z}_N(s)/\tilde{Z}_N(s)
\]
and $\ip{M_N}$ is its bracket (not to be confused with Gibbs average). The bracket is the unique random process such that $M_N^2(t)-\ip{M_N}(t)$ is a martingale, and is equivalent to the quadratic variation when the local martingale is continuous. The next step is to show that $M_N$ is a centered Gaussian process via $\ip{M_N}$. By Rebolledo's CLT for local martingales \cite{Rebolledo_1980}, if $M_N$ is a martingale whose bracket $\ip{M_N}$ converges in probability to a deterministic function $\phi$ defined on $[0,1)$, then the martingale $M_N$ itself converges in probability to a centered Gaussian process on $[0,1)$ with independent increments and variance $\phi$. Gaussian fluctuations of the free energy then follows directly from \eqref{eq:expM}. Indeed, $\ip{M_N}(t)$ converges in probability to the deterministic function $\phi(t)= \frac12 \left(\log \frac1{1-t} - t\right)$. The expression of $\phi$ matches with the variance established in Theorem \ref{thm:ALR}.

\paragraph{Low temperature results.} In contrast to the high-temperature regime, the fluctuations of the free energy in the low-temperature regime remains an open problem. Regardless, we discuss the upper bound and lower bound that are obtained for the fluctuations of the free energy and those of the ground state energy (the zero-temperature case). In our discussion, we follow the convention that ``finite temperature" refers to finite, non-zero temperature.

First, we clarify some terminologies.  Given a sequence $\{X_n\}_{n\geq 1}$ of random variables, we say that $X_n$  has fluctuation exponent $\r$ if it has fluctuations of order $n^{\r+o(1)}$. Moreover, we say $X_n$ has a fluctuations of order at most $\Delta_n$ if there is a constant $c>0$ such that for all large $n$, we have $\Var(X_n) \leq c \Delta_n^2$. In the case of a lower bound, we follow \protect{\cite[Definition 1.1]{Chatterjee19}} and say that $X_n$ has a fluctuations of order at least $\delta_n$ if there are constants $c_1,c_2>0$ such that for all large $n$, for all $-\infty<a\leq b<\infty$ where $b-a\leq c_1 \delta_n$, we have $\bP(a \leq X_n \leq b) \leq 1-c_2.$

In the zero-temperature regime, there has been an extensive effort on determining the fluctuation exponent $\r$ of the ground state energy. Several works, including \cites{bouchaud2003energy, Boettcher_2005}, support the claim that $\r = -3/4$, while other works such as \cites{crisanti1992replica, conjSK1, ParisiRizzo09} predict that the exponent is actually $-5/6$. Notably, it is incredibly hard to obtain numerical estimation that distinguishes between the two scalings, and exact algorithms are limited to very small systems. However, we have that $\r \leq -1/2$ (see \cite[Section 2.2]{Chatterjee19}). A formal argument for $\r \leq -3/4$ via the non-rigorous replica method is provided in \cite{conjSK2}. 

On the mathematical front, Chen, Handschy and Lerman show in \cite{ChenHandschyLerman2018} that the ground state energy fluctuations are $o(N^{-1/2})$. In turn, Chatterjee provides a lower bound in \cite{Chatterjee19}, showing that the ground state energy has fluctuations of at least order $N^{-1}$.

Compared to the zero-temperature regime, it is even more difficult to obtain numerical estimations in the finite temperature regime. It has been proposed in \cite{conjSK2} that this can be done by simulating chaos. However, various works \cites{Chatterjee09, Chatterjee2014book, Chatterjee19} of Chatterjee provide upper and lower bounds on the fluctuations of the free energy. These bounds hold at all finite temperatures. 

\begin{theorem}[Upper bound under finite temperature, \protect{\cite[Theorem 1.5]{Chatterjee09}}]\label{thm:chatterjee_upperbd}
For the SK model, there exists an absolute constant $C$ such that, for any $\b \in (0,\infty)$, 
\[
\Var(F_N(\b)) \leq \frac{C\b^2\log(2+C\b)}{N \log N}.
\]
\end{theorem}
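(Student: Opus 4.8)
The natural starting point is the Gaussian Poincaré inequality, which bounds the variance of a function of independent standard Gaussians by the expected squared norm of its gradient. Viewing $F_N$ as a function of the disorder variables $(g_{ij})$, I would first compute, for each pair $(i,j)$,
\[
\frac{\partial F_N}{\partial g_{ij}} = \frac{1}{N}\,\frac{1}{Z_N}\frac{\partial Z_N}{\partial g_{ij}} = \frac{\b}{2N^{3/2}}\,\langle \s_i\s_j\rangle,
\]
where $\langle\cdot\rangle$ is the Gibbs average. Summing squares and using the replica identity $\sum_{i,j}\langle\s_i\s_j\rangle^2 = N^2\,\langle R_{1,2}^2\rangle$ gives
\[
\|\nabla F_N\|^2 = \frac{\b^2}{4N}\,\langle R_{1,2}^2\rangle \le \frac{\b^2}{4N},
\]
since $|R_{1,2}|\le 1$. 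Poincaré then yields the clean bound $\Var(F_N)\le \b^2/(4N)$. This already has the correct $N^{-1}$ scaling but misses the crucial extra factor $1/\log N$, so the entire difficulty is to improve Poincaré by a logarithmic factor; this is precisely the phenomenon of \emph{superconcentration}.

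To capture the improvement I would pass to the covariance representation of the variance through the Ornstein--Uhlenbeck semigroup $(P_s)_{s\ge 0}$ acting on the disorder,
\[
\Var(F_N) = 2\int_0^\infty e^{-2s}\,\E\!\left[\langle \nabla F_N,\, P_{2s}\nabla F_N\rangle\right]ds = 2\int_0^\infty e^{-2s}\sum_{i,j}\E\!\left[(P_s\,\partial_{ij}F_N)^2\right]ds.
\]
At $s=0$ the integrand recovers the Poincaré bound, so any gain must come from showing that the integrand decays as the interpolation time $s$ grows. This is where disorder chaos enters: each $\E[(P_s\,\partial_{ij}F_N)^2]$ compares the local field $\langle\s_i\s_j\rangle$ computed at the disorder $g$ with the same quantity computed at an OU-perturbed, correlated disorder, and these become asymptotically uncorrelated as $s$ increases.

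The key quantitative input is Chatterjee's superconcentration lemma (see \cite{Chatterjee2014book}), whose engine is hypercontractivity: for large interpolation times the $L^2\!\to\!L^2$ action of $P_s$ can be traded for a bound involving the low-moment norms of the partial derivatives, which are in turn controlled by the uniform estimate $\|\partial_{ij}F_N\|_\infty \le \b/(2N^{3/2})$. Splitting the integral at a crossover time $s^\ast$ and optimizing, one obtains an estimate of the schematic form
\[
\Var(F_N) \;\lesssim\; \frac{\E\|\nabla F_N\|^2}{\log\!\big(\E\|\nabla F_N\|^2 / \max_{i,j}\|\partial_{ij}F_N\|_\infty^2\big)}.
\]
Here the ratio inside the logarithm is of polynomial order in $N$ --- indeed $\E\|\nabla F_N\|^2/\max_{i,j}\|\partial_{ij}F_N\|_\infty^2$ is at most $N^2$ --- so its logarithm contributes the sought factor of order $\log N$, and together with $\E\|\nabla F_N\|^2\le \b^2/(4N)$ this produces the claimed bound of order $\b^2/(N\log N)$.

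The main obstacle is establishing the superconcentration/decorrelation estimate with explicit constants that are uniform over the entire temperature range $\b\in(0,\infty)$. The hypercontractive balancing argument must be carried out while tracking the $\b$-dependence of both the crossover time and the moment bounds, and it is this careful bookkeeping --- rather than the high-temperature cancellations exploited in Theorem \ref{thm:ALR} --- that generates the extra $\log(2+C\b)$ factor in the numerator and forces the constant $C$ to be absolute. A secondary, more routine point is to justify the differentiation under the Gibbs average and the semigroup manipulations (which are standard for the smooth, rapidly growing functional $F_N$), and to pass from the independent-entry computation above to the symmetric GOE normalization, a change that affects only constants.
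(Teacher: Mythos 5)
Your setup is sound and in fact coincides with the paper's: the derivative computation, the Poincar\'e bound $\b^2/(4N)$, and the Ornstein--Uhlenbeck covariance representation of the variance are exactly the variance identity \eqref{eq:varid_chatterjee}, since the interpolated Hamiltonians $H^t_N,H^{-t}_N$ are precisely the OU-correlated disorders. The gap is in the mechanism you invoke for the decay of the integrand. The schematic inequality $\Var(f)\lesssim \E\|\nabla f\|^2/\log\bigl(\E\|\nabla f\|^2/\max_{i,j}\|\partial_{ij}f\|_\infty^2\bigr)$ is false as a general principle: for a linear function such as $f(g)=N^{-1}\sum_i g_i$ the ratio inside the logarithm is of order $N$, yet there is no improvement over Poincar\'e. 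The correct hypercontractive (Talagrand $L^1$--$L^2$) bound involves the per-coordinate ratio $\|\partial_{ij}f\|_2/\|\partial_{ij}f\|_1$, and for the SK free energy this ratio is $O(1)$ at every temperature: $\partial_{ij}F_N$ is proportional to $\langle\s_i\s_j\rangle$, whose $L^1$ and $L^2$ norms over the disorder are comparable (at high temperature it behaves like $\tanh(\b g_{ij}/\sqrt N)$, an essentially one-dimensional, mean-zero variable; at low temperature it is of order one). So hypercontractivity alone yields no logarithmic gain here, and this is exactly why Chatterjee's argument does not go through the generic superconcentration lemma.

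The actual engine --- and the bulk of the proof --- is the quantitative disorder chaos estimate $\E\langle(\xi(R_{\s,\s'}))^k\rangle_t\le (Ck)^k N^{-k\min\{1,\,t/(C\log(1+C\b))\}}$, a model-specific statement about the coupled Gibbs measure $\langle\cdot\rangle_t$, proven by Gaussian integration by parts and a bootstrap on moments of the coupled overlap rather than by semigroup smoothing of individual partial derivatives. Plugging $k=1$ into \eqref{eq:varid_chatterjee} and integrating (the interval $[0,T]$ with $T=C\log(1+C\b)$ contributes at most $C\b^2 T/(N\log N)$ since $\int_0^\infty N^{-t/T}\,\dd t= T/\log N$, and the tail contributes $O(\b^2N^{-2})$) produces simultaneously the $1/\log N$ gain and the $\log(2+C\b)$ factor in the numerator. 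Your proposal treats this decorrelation as a consequence of generic superconcentration machinery plus the uniform bound $\|\partial_{ij}F_N\|_\infty\le \b/(2N^{3/2})$, and thereby assumes precisely the hard step.
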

The above upper bound is not sharp for high temperature, as the variance is of order $N^{-2}$ in this regime by \cite{AizenmanLebowitzRuelle}. However, to our knowledge, Theorem \ref{thm:chatterjee_upperbd} is the most recent progress for the low-temperature regime.

In \cite{Chatterjee09}, the theorem is proven using Gaussian interpolation. Although the proof is shown for general mixed $p$-spin SK models, we provide a proof sketch for the 2-spin SK model. We observe that
    \[
    \mathrm{Cov}(H_N(\s), H_N(\s')) = \frac{(\s\cdot \s')^2}{2N} =N\xi(R_{\s,\s'}), 
    \]
    where we set $\xi(x):= x^2/2$ and $R_{\s,\s'} = \frac{\s\cdot \s'}{N}$, which satisfies $|R_{\s,\s'}| \leq 1$. 
Upon this observation, the three key steps of the proof are as follows.
\begin{itemize}
    \item Take two independent copies $H'_N, H''_N$ of $H_N$. Given $t\geq 0$, we define
    \begin{align*}
        H^t_N &:= e^{-t}H_N + \sqrt{1-e^{-2t}}H_N',\\
        H^{-t}_N &:= e^{-t}H_N + \sqrt{1-e^{-2t}}H_N''.
    \end{align*}
    \item Via Gaussian integration by parts, we have the variance identity
    \beq\label{eq:varid_chatterjee}
    \Var(F_N(\b)) =\frac{\b^2}{N} \int_0^\infty e^{-t} \bE \ip{\xi(R_{\s,\s'})}_{t} \dd t,
    \eeq
    where the expectation $\ip{\cdot}_t$ is given by
    \[
    \ip{f}_t = \frac{\sum_{\s, \r}f(\s,\r) e^{\b H^t(\s)+\b H^{-t}(\r)}}{\sum_{\s', \r'} e^{\b H^t(\s')+\b H^{-t}(\r')}}.
    \]
    \item One then shows that there exists an absolute constant $C$ such that, for all $t\geq 0$ and for all $k \in \bN$, 
    \[
    \bE\ip{(\xi(R_{\s,\s'}))^k}_{t} \leq (Ck)^k N^{-k \min \left\{1, t/\left(C\log(1+C\b)\right)\right\}}.
    \]
    Applying this statement with $k=1$ to the variance identity and integrating, we obtain the right hand side of theorem, with the main contribution coming from the integral over the sub-interval $[0, C\log(1+C\b)]$ (contribution from the rest is $O(N^{-2})$). 
\end{itemize}

We would like to highlight the key ingredient that is Gaussian analysis. In all models discussed in this survey (and their mixed $p$-spin extensions), the collection $\{H_N(\s)\}_{\s \in \Sigma_N}$ forms a mean-zero Gaussian process, and consequently, their joint distribution is determined by the covariances. Mean-field models in general do not share this property of Gaussianity (e.g., the Hopfield model \cite{Hopfield} and other models in \cite{Talagrandvol1}). The covariances are functions of the overlap, so controlling the overlap is a key part of the analysis. We will see this recipe of a variance identity via interpolation plus control of the overlap again in the derivation of an upper bound for the variance in the critical temperature case, as done in \cite{ChenLam19}.

A lower bound is also established for all $\b \in (0,\infty)$. In \cite{Chatterjee19}, Chatterjee shows that, at all finite temperatures and under the absence of the external field, the order of fluctuations is at least $N^{-1}$. The idea behind the method, generally speaking, involves perturbing the disorder $\{g_{ij}\}_{i,j=1}^N$ and then comparing the total variation distance between law of original free energy and that of the perturbed one. In the process, convexity in $\b$ of free energy and monotonicity in $\b$ of the limiting free energy are used.

\paragraph{Critical temperature results.}
In light of the above two sections, we observe that the behavior of the free energy in the high-temperature regime is well-understood, while the situation is complicated in the low-temperature regime. Naturally, we question how the free energy fluctuates at the critical temperature, i.e.\@ $\b=1$.

Various works, from numerical computations to heuristic argument, such as \cites{Parisietal_1993, conjSK2}, support the following conjecture.
\begin{conjecture}
For the SK model, at the critical value $\b_c=1$, we expect
\[
\Var(F_N^{SK}(\b_c))= \frac16 N^{-2}\log N + O(N^{-2}), \quad \text{as } N \to \infty.
\]
\end{conjecture}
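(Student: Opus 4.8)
The plan is to push the cluster (cycle) expansion of Aizenman--Lebowitz--Ruelle past its radius of convergence. The starting point is that the high-temperature variance in Theorem~\ref{thm:ALR} is itself a sum over cycle lengths,
\[
\s^2(\b) = -\tfrac12\log(1-\b^2) - \tfrac{\b^2}{2} = \sum_{k\ge 2}\frac{\b^{2k}}{2k},
\]
a series that diverges logarithmically exactly at $\b=1$. This divergence is the analytic signature of the conjectured $\log N$ correction: at high temperature the contribution of length-$k$ cycles decays geometrically, so the series converges and $\Var(F_N)\sim \s^2/N^2$; at $\b=1$ every length contributes on equal footing and the only thing taming the sum is the finiteness of $N$. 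The strategy is therefore to redo the expansion at $\b=1$ while retaining the $k$-dependence of each term that the high-temperature argument discards.

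Concretely, I would begin from $\log Z_N = N\log 2 + \sum_{i<j}\log\cosh(J_{ij}/\sqrt N) + \log\hat Z_N$ and control the variance of each piece. In $\hat Z_N = \sum_\Gamma \prod_{b}\tanh(J_b/\sqrt N)$ the dominant fluctuations come from simple cycles: writing $C_k$ for the sum over cycles of length $k$, one has $\bE C_k = 0$, and since two simple cycles contribute to $\bE[C_kC_{k'}]$ only when they coincide,
\[
\Var(C_k) \approx \frac{N^k}{2k}\,e^{-k^2/(2N)}\cdot\bigl(\bE\tanh^2(J/\sqrt N)\bigr)^k \approx \frac{1}{2k}\,e^{-k^2/(2N)},
\]
where $\tfrac{N^k}{2k}e^{-k^2/(2N)}$ counts length-$k$ cycles on $N$ vertices and $\bE\tanh^2(J/\sqrt N)=N^{-1}+O(N^{-2})$. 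Summing over $k$ replaces the divergent harmonic series by $\sum_k \frac{1}{2k}e^{-k^2/(2N)}$, which is of order $\log N$, the Gaussian factor cutting the sum off near the self-intersection scale $k\sim\sqrt N$. The remaining tasks are to prove that the cross terms between $\cosh$-fluctuations and cycle-fluctuations, together with non-cycle even subgraphs, are genuinely lower order, and to extract the precise constant in front of $\log N$.

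I expect the constant to be the real obstacle, and it is precisely what separates a plausibility argument from a proof. The naive single-cycle count above gives a cutoff at $k\sim\sqrt N$ and hence $\tfrac14\log N$, not the conjectured $\tfrac16\log N$; the missing factor $\tfrac23$ must be produced by effects invisible at high temperature. Candidate sources are the correlations between the $\cosh$-term and the cycle sum, the contributions of more complex even subgraphs (theta graphs, cacti, cycles sharing vertices) which are uniformly negligible for $\b<1$ but become comparable to simple cycles once the expansion sits at its radius of convergence, and the failure of the $\{C_k\}$ to remain asymptotically uncorrelated across $k$. Indeed, matching $\tfrac16\log N=\tfrac12\log N^{1/3}$ against the high-temperature formula suggests an effective cutoff at $k\sim N^{1/3}$ rather than $\sqrt N$, so the mechanism to be identified is whatever suppresses cycles of length between $N^{1/3}$ and $N^{1/2}$. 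Controlling all of this at $\b=1$, where the exponential decay in graph size that drives the Aizenman--Lebowitz--Ruelle argument no longer applies, is the crux.

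An alternative route is the stochastic-analysis approach of Comets--Neveu, in which the free energy is governed by the bracket $\ip{M_N}(1)$ of the exponential martingale, whose deterministic limit $\phi(t)=\tfrac12\bigl(\log\tfrac1{1-t}-t\bigr)$ diverges at $t=1$. Here the program would be to compute the finite-$N$ asymptotics of $\bE\ip{M_N}(1)$ and show it concentrates around $\tfrac16\log N$, while replacing Rebolledo's martingale CLT --- which degenerates at the critical point since the limiting variance is infinite --- by a refined central limit theorem valid in the critical scaling window. Pinning down the constant is again the decisive step, and locating the correct $N^{-1/3}$-type cutoff appears to be the most promising way to reconcile the two approaches.
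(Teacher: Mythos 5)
There is a fundamental mismatch here: the statement you are trying to prove is stated in the paper as a \emph{conjecture}, and the paper offers no proof of it. The paper's only support for the asymptotics $\Var(F_N(\beta_c))=\frac16 N^{-2}\log N+O(N^{-2})$ consists of citations to numerical and non-rigorous replica computations \cite{Parisietal_1993,conjSK2}, together with the best rigorous partial result, the Chen--Lam upper bound $\Var(F_N(\beta_c))\le C N^{-2}(\log N)^2$ from \cite{ChenLam19}, which is off by a factor of $\log N$ and gives no lower bound and no constant. So your proposal cannot be compared against a proof in the paper; it can only be assessed as a program for an open problem, and as such it is honest but incomplete in exactly the place that matters.

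The concrete gap is the one you flag yourself: the naive simple-cycle count, $\Var(C_k)\approx\frac{1}{2k}e^{-k^2/(2N)}$ summed up to the self-intersection scale $k\sim\sqrt N$, yields $\frac14\log N$, not $\frac16\log N$, and you do not identify the mechanism that suppresses cycles of length between $N^{1/3}$ and $N^{1/2}$. Your guess that the true cutoff is $k\sim N^{1/3}$ is well motivated --- it is consistent with the rigorously established critical-window result for the spherical model (Theorem \ref{thm:SSK_crit}), where the variance $\frac16\log N$ arises from an edge CLT for linear spectral statistics at the $N^{-2/3}$ eigenvalue-spacing scale, i.e.\ effectively from the first $\sim N^{1/3}$ terms of the log-determinant expansion --- but for Ising spins there is no contour-integral representation and no random-matrix machinery to import, and the cluster expansion of \cite{AizenmanLebowitzRuelle} loses the exponential decay in graph size precisely at $\beta=1$, so the non-cycle even subgraphs and the correlations among the $C_k$ are not controlled by any known argument. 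The same degeneration afflicts your alternative route via Comets--Neveu, since the limiting bracket $\phi(t)$ diverges at $t=1$ and Rebolledo's CLT gives nothing in the critical window. Until one of these obstructions is overcome, the statement remains a conjecture, and your write-up should be presented as a heuristic consistent with it rather than as a proof.
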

Notable progress towards this conjecture for the critical temperature regime is \cite{ChenLam19} of Chen and Lam. 

\begin{theorem}[SK model at the critical temperature, \cite{ChenLam19}]\label{thm:chenlam19}
The following statements hold:
    \begin{enumerate}
    \item There exists a constant $C>0$ such that 
    \beq\label{item:chenlam19_crit}
    	\Var\left(F_N(\b_c)\right) \leq C \left( N^{-2}(\log N)^2 +1\right).
    \eeq
    \item For fixed $\a>0$ and $d>0$, there is a constant $C>0$, depending only on $\a$ and $\d$, such that
    \beq
    	\Var\left(F_N(\sqrt{\b_c^2+dN^{-\a}})\right) \leq C \left( N^{-2}(\log N)^2 + N^{-1-\a}\right).
    \eeq
    \end{enumerate}
\end{theorem}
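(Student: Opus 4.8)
The plan is to follow the same interpolation recipe used for Theorem~\ref{thm:chatterjee_upperbd}, but to feed it a far sharper bound on the overlap that survives up to and slightly past $\b_c=1$. I would start from Chatterjee's exact variance identity \eqref{eq:varid_chatterjee}, which for the 2-spin model ($\xi(x)=x^2/2$) reads
\[
\Var(F_N(\b)) = \frac{\b^2}{2N}\int_0^\infty e^{-t}\,\bE\ip{R_{\s,\r}^2}_t\,\dd t,
\]
where $\ip{\cdot}_t$ is the coupled two-replica Gibbs measure built from $H^t$ (governing $\s$) and $H^{-t}$ (governing $\r$). This identity is model-independent, so the whole problem reduces to controlling $\bE\ip{R_{\s,\r}^2}_t$ as a function of $t$. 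The reason Theorem~\ref{thm:chatterjee_upperbd} is lossy here is precisely that its moment bound $\bE\ip{R^2}_t \lesssim N^{-t/(C\log(1+C\b))}$ degrades to an $O(1)$ estimate for small $t$, so the integral is dominated by $t\approx 0$ and yields only $O(1/(N\log N))$.

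The crux is therefore a near-critical overlap estimate of the form
\[
\bE\ip{R_{\s,\r}^2}_t \;\le\; \frac{C\log N}{N\,(1-\b^2 e^{-2t})},
\]
valid whenever $\b^2 e^{-2t}<1$, together with the trivial bound $\bE\ip{R^2}_t\le 1$. The factor $(1-\b^2 e^{-2t})^{-1}$ is the coupled analogue of the high-temperature scaling $\bE\ip{R^2}\asymp \frac{1}{N(1-\b^2)}$: the cross-covariance of $\b H^t(\s)$ and $\b H^{-t}(\r)$ carries weight $\b^2 e^{-2t}$, so $\b^2 e^{-2t}$ plays the role of an effective coupling that interpolates from full correlation at $t=0$ to decorrelation as $t\to\infty$. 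I would derive this estimate by a cavity/smart-path computation on the coupled system, tracking the self-interaction of the overlap at the transition. This is the genuinely hard step and the heart of the argument: at $\b=1$ the margin $1-\b^2$ closes, the overlap no longer concentrates at the high-temperature rate, and even the $\log N$-corrected bound requires delicate control near the singularity. That single extra $\log N$ is exactly the loss separating the result from the conjectured $\tfrac16 N^{-2}\log N$.

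Granting this estimate, part~1 follows by splitting the integral at the threshold $t_0$ where the refined bound reaches $1$, namely $1-e^{-2t_0}\asymp (\log N)/N$, so $t_0\asymp (\log N)/N$. On $[0,t_0]$ the trivial bound contributes $O(t_0)=O(N^{-1}\log N)$; on $[t_0,\infty)$ one integrates $\tfrac{C\log N}{N(1-e^{-2t})}$ against $e^{-t}$, and the logarithmic singularity of $(1-e^{-2t})^{-1}$ at the origin contributes a further factor $\log(1/t_0)\asymp \log N$, giving the dominant term $O(N^{-1}(\log N)^2)$. Multiplying by $\b^2/(2N)$ yields $\Var(F_N(\b_c))=O(N^{-2}(\log N)^2)$.

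For part~2 I would run the identical argument at $\b^2=\b_c^2+dN^{-\a}=1+dN^{-\a}$. Now the refined bound is only available once $\b^2 e^{-2t}<1$, i.e. for $t>t_*\asymp \tfrac{d}{2}N^{-\a}$; on the short initial interval $[0,t_*]$ the system is effectively just above criticality and one falls back on $\bE\ip{R^2}_t\le 1$, contributing $O(t_*)=O(N^{-\a})$ to the integral and hence $O(N^{-1-\a})$ to the variance. On $[t_*,\infty)$ the bound is again singular, now at $t_*$, so one inserts a secondary cutoff at distance $\asymp(\log N)/N$ from $t_*$ exactly as in part~1, which reproduces the $O(N^{-1}(\log N)^2)$ estimate. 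The two pieces combine to give $C(N^{-2}(\log N)^2+N^{-1-\a})$. The main obstacle throughout is the near-critical overlap estimate; once it is in hand, everything else is bookkeeping on a one-dimensional integral with a logarithmic singularity.
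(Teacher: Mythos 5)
Your reduction is the same one the paper describes for \cite{ChenLam19}: start from the variance identity \eqref{eq:varid_chatterjee}, replace Chatterjee's moment bound (which indeed degenerates for small $t$) by a refined $t$-dependent bound on $\bE\ip{R_{1,2}^2}_t$, and integrate. Your bookkeeping is also correct: a bound of the shape $\bE\ip{R_{1,2}^2}_t \le C\log N/\bigl(N(1-\b^2e^{-2t})\bigr)$, cut off by the trivial bound near the singularity, produces exactly $N^{-1}(\log N)^2$ for the integral at $\b_c$ and the extra $N^{-\a}$ window $[0,t_*]$ for $\b^2=\b_c^2+dN^{-\a}$, hence the two stated variance bounds. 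So the architecture and the integration are fine.

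The genuine gap is that the near-critical overlap estimate --- which you correctly identify as the heart of the matter and which is essentially the entire content of \cite{ChenLam19} --- is asserted rather than proved. ``A cavity/smart-path computation on the coupled system, tracking the self-interaction of the overlap'' is not a proof and, taken at face value, fails at $\b=1$: the standard cavity/second-moment derivation of $\bE\ip{R_{1,2}^2}\le C/(N(1-\b^2))$ closes a self-consistent inequality whose margin is exactly $1-\b^2$, so it gives nothing at criticality, and it is not clear from your sketch where the saving factor $\log N$ would come from. The device that actually makes this work is Guerra--Toninelli's \emph{quadratic replica coupling} \cite{GuerraToninelli_2002a}: one tilts the coupled two-replica system by $\lambda N R_{\s,\r}^2$, controls the tilted coupled free energy by Gaussian interpolation against two decoupled copies (exploiting that the tilt can be absorbed into an effective inverse temperature $\sqrt{\b^2e^{-2t}+2\lambda}$, which is where the margin $1-\b^2e^{-2t}$ enters), and then extracts the second moment of the overlap by exponential Chebyshev; the $\log N$ is the price of converting the exponential-moment bound back into a bound on $\bE\ip{R_{1,2}^2}_t$. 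Without this (or an equivalent mechanism) your key display is unsubstantiated, and everything downstream, while correctly executed, rests on it.
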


Proof of Theorem \ref{thm:chenlam19} in \cite{ChenLam19} follows the general steps that we highlight for Theorem \ref{thm:chatterjee_upperbd}. In particular, the method requires an upper bound on $\bE \ip{R_{1,2}^2}_t$.

Prior to \cite{ChenLam19}, Talagrand proves in \cite[Section 11.7]{Talagrandvol2} that, for all $\b \leq 1$, we have $\bE \ip{R_{1,2}^2} \leq C/\sqrt{N}$. Next, Chatterjee observes that $\bE \ip{R_{1,2}^2}_{t}$ is a decreasing function of $t$. Combining these two facts with the variance identity \eqref{eq:varid_chatterjee}, one obtains the bound
\[
\Var(F_N(\b_c)) \leq CN^{-3/2}.
\]
By providing a tighter, $t$-dependent bound of the double expectation $\bE \ip{R_{1,2}^2}_{t}$, the authors of \cite{ChenLam19} are able to obtain an improvement \eqref{item:chenlam19_crit}. In order to derive such a bound with explicit dependence on $t$, they employ a quadratic replica coupling, which is introduced in \cite{GuerraToninelli_2002a}. Later in Section \ref{sec:multispecies}, we adopt this method and provide an extension of Theorem \ref{thm:chenlam19} to a subset of \emph{multi-species} models, which contains the SK model.

\subsection{Results for SK model with external field}\label{sec:SK+ext}

We now review the literature on free energy of the SK model under the presence of the external field. In particular, we consider the following Hamiltonian: 
\[
H_N(\s) = \frac1{\sqrt N}\sum_{i,j=1}^N g_{ij}\s_i\s_j  + \sum_{i\leq N} (h+\b_1 g_i)\s_i,
\]
where $g_1, \dots, g_N$ are i.i.d.\@ standard Gaussian random variables. Here, the external field term has both a deterministic direction that is the all-one vector and a random direction that is the gaussian vector $(g_1, \dots, g_N)^T$. The non-negative constants $h$ and $\b_1$ denote the external field strengths. 

In \cite{ChenDeyPanchenko}, Chen, Dey and Panchenko show that, under the presence of an external field, the free energy has Gaussian fluctuations of order $N^{-1/2}$ at all temperatures. Prior to \cite{ChenDeyPanchenko}, fluctuation results for the free energy are verified in the high temperature case in \cite[Theorem 5]{GuerraToninelli_2002b} and in \cite{Tindel_2005}, the latter of which obtains the same CLT statement for a subset of the high-temperature regime via the stochastic calculus approach. 

\begin{theorem}[SK model with external field, \protect{\cite[Theorem 2]{ChenDeyPanchenko}}]\label{thm:chendeypanchenko}
    Suppose $h^2+\b_1^2 \neq 0$. For some $\nu = \nu(h,
    \b_1)>0$, it holds that 
    \[
    \lim_{N\to \infty} d_{\mathrm{TV}}\left(\sqrt{N/\nu}(F_N-\bE F_N), g\right) = 0,
    \]
    where $d_{\mathrm{TV}}$ is the total variation distance and $g$ is a standard Gaussian random variable. 
\end{theorem}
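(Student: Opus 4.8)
Since the statement is in total variation, I would use Stein's method, regarding $\log Z_N$ as a smooth function of the Gaussian disorder $G=(\{g_{ij}\},\{g_i\})$ and comparing $W_N=\sqrt{N/\nu}\,(F_N-\bE F_N)$ to a standard Gaussian via Gaussian integration by parts. It is worth recording at the outset why the most automatic such estimate, the second-order Gaussian Poincar\'e inequality, does \emph{not} suffice: that bound is controlled by $\bE\|\nabla^2\log Z_N\|_{\mathrm{op}}^4$, and the Hessian $\nabla^2\log Z_N$ is exactly the Gibbs covariance matrix of the observables $\{N^{-1/2}\s_i\s_j\}$ and $\{\b_1\s_i\}$, whose operator norm is generically not $o(\sqrt N)$ at low temperature (it measures the large Gibbs fluctuations of quadratic, energy-type observables), which is what the inequality would require. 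The Gaussian limit must therefore come from a finer, approximately-independent structure, which I would expose with the cavity method.

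\textbf{Cavity decomposition.} The plan is to add the spins one site at a time and track the increment of the free energy. Removing the interactions attached to site $k$ yields a cavity system, and $\log Z_N$ minus the cavity free energy equals the free energy of a single spin $\s_k$ in an effective field $h+\b_1 g_k+N^{-1/2}\sum_{j}(g_{kj}+g_{jk})\ip{\s_j}_{\mathrm{cav}}$, whose random part is conditionally Gaussian in the disorder. Telescoping over $k$, or equivalently forming the martingale differences $D_k=\bE[\log Z_N\mid\cF_k]-\bE[\log Z_N\mid\cF_{k-1}]$ with $\cF_k$ generated by the disorder touching sites $1,\dots,k$, writes $\log Z_N-\bE\log Z_N=\sum_{k=1}^N D_k$ as a sum of orthogonal, asymptotically independent contributions each of order one. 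A martingale/Stein central limit theorem then yields the total-variation bound once the conditional variances are shown to stabilize and a Lyapunov-type estimate $\sum_k\bE D_k^4=o(N^2)$ is in hand.

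\textbf{The constant $\nu$ and the role of the field.} I would identify $\nu=\lim_N N\,\Var(F_N)=\lim_N N^{-1}\sum_k\bE D_k^2$ and show it is strictly positive precisely under the hypothesis $h^2+\b_1^2\neq0$. The mechanism is that the external field supplies genuine per-site randomness: the single-spin $\log\cosh$ contributions of the effective fields form an approximately i.i.d.\@ sum that does not cancel, placing the total fluctuation at the scale $N^{-1/2}$. This should be contrasted with the fieldless model, where spin-flip symmetry forces cancellations, the increments $D_k$ are of order $N^{-1/2}$ rather than order one, and the fluctuations collapse to the smaller scale $N^{-1}$ recorded in Theorem \ref{thm:ALR}; there $\nu=0$ and the normalization $\sqrt{N/\nu}$ is not even well defined. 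Thus $h^2+\b_1^2\neq0$ is exactly the condition that makes a nondegenerate Gaussian limit at scale $N^{-1/2}$ meaningful.

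\textbf{Main obstacle.} The crux is to make the cavity expansion rigorous \emph{uniformly at all temperatures}, and this is where the real difficulty lies. At low temperature the Gibbs measure can display replica symmetry breaking, so the overlaps $R_{12}$ entering the effective fields and the conditional variances $\bE[D_k^2\mid\cF_{k-1}]$ need not concentrate under the Gibbs average, and the remainder terms dropped in the single-site expansion must be controlled without any high-temperature smallness. I would handle self-averaging in the disorder through Gaussian concentration of measure (the map $G\mapsto\log Z_N$ is Lipschitz, giving sub-Gaussian tails for $\log Z_N$ and its smooth functionals at every $\b$), but converting this into stabilization of the conditional variances and negligibility of the cavity remainder requires quantitative control of low-order overlap moments holding across the entire temperature range. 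It is precisely the symmetry-breaking field that makes this possible and that lets a single Gaussian limit persist at all temperatures, in sharp contrast to the delicacy of the fieldless low-temperature problem, which remains open.
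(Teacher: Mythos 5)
This survey does not reproduce a proof of Theorem \ref{thm:chendeypanchenko}; it only cites \cite{ChenDeyPanchenko}, so your proposal must be measured against the argument in that reference. There the route is Stein's method in total variation, exactly the option you raise and then abandon: one applies Gaussian integration by parts to the full disorder at once, and the error term in the Stein bound is controlled not by overlap concentration under the Gibbs measure but by the \emph{disorder} fluctuations of the Gibbs averages $\langle R_{1,2}\rangle$ and $\langle \xi(R_{1,2})\rangle$. These self-average at every temperature, by concentration of the free energy combined with convexity in auxiliary parameters (equivalently, via the Ghirlanda--Guerra machinery); no replica-symmetric structure, no concentration of $R_{1,2}$ itself, and no high-temperature smallness is needed. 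The even-spin assumption and the positivity of the overlap (Talagrand's positivity principle) then give $\nu>0$ when $h^2+\b_1^2\neq 0$. This is the single idea that makes the theorem work uniformly in $\b$, and it is precisely the idea missing from your write-up: your ``main obstacle'' paragraph correctly locates the difficulty (stabilizing the conditional variances and controlling the cavity remainders across the whole temperature range, including the RSB phase) but offers no mechanism for resolving it. Gaussian concentration of $\log Z_N$ alone does not control the low-order overlap moments you need, and the external field does not force the overlap to concentrate under the Gibbs measure at low temperature --- the survey itself records that the SK model with a field exhibits RSB above the AT line. As written, the proposal defers the entire content of the theorem to an unproved estimate.

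There is a second, structural problem with the cavity/martingale route: a martingale CLT for $\sum_k D_k$ of the Heyde--Brown or exchangeable-pairs type yields convergence in distribution, or at best a Kolmogorov- or Wasserstein-distance bound; it does not by itself produce convergence in \emph{total variation}, which is what the theorem asserts. The total variation statement in \cite{ChenDeyPanchenko} comes from treating $F_N$ as a smooth functional of the Gaussian field and exploiting integration by parts globally, not site by site. So even if the uniform-in-temperature estimates were supplied, your decomposition would prove a weaker conclusion than the one claimed. Your opening observation that the second-order Poincar\'e inequality fails because $\|\nabla^2\log Z_N\|_{\mathrm{op}}$ is too large is a fair and relevant remark, and your heuristic for why $\nu>0$ exactly when $h^2+\b_1^2\neq0$ (per-site randomness from the field versus the symmetry-forced cancellation at $h=\b_1=0$) matches the correct picture; but the proof itself has a genuine gap at its central step and targets the wrong metric.
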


The result holds for mixed $p$-spin model with no odd $p$, and the parameter $\nu$ in the limiting Gaussian distribution is explicitly described. Note that the asymptotics of the expected free energy $\bE F_N$ are not provided in their paper. We direct the readers to \cite{ChenDeyPanchenko} for a full description of $\nu$. 

Combining the theorem with fluctuation results for the case of no external field in Section \ref{subsec:sknoext}, we arrive at the following summary. In the finite temperature case, the free energy fluctuations for the SK model are:
\begin{itemize}
    \item Gaussian and of order $1/N$ at high temp with no external field ($\b<1$ and $h=0$), 
    \item of order at least $1/N$ and at most $1/\sqrt{N \log N}$ at low temperature with no external field ($\b>1$ and $h=0$),
    \item Gaussian and of order $1/\sqrt{N}$ at any temp with external field ($h>0$). 
\end{itemize}

In the zero temperature case, due to the work \cite{ChenHandschyLerman2018} by Chen, Handschy and Lerman, the fluctuations of ground state energy for the SK model are:
\begin{itemize}
    \item $o(N^{-1/2})$ under no external field ($h=0$),
    \item Gaussian and of order $N^{-1/2}$ under positive external field ($h>0$).
\end{itemize}

Furthermore, the transitional regime between zero external field and positive external field is also understood for the high-temperature regime, due to the work \cite{DeyWu23} of Dey and Wu. In particular, by considering the limit $h\downarrow 0$ in the form of
\[
h = \rho N^{-\a} \quad \text{where the constants $\rho, \a$ lie in }(0,\infty),
\]
the authors obtain limit statements for the fluctuations in three regimes: sub-critical ($\a>1/4$), critical ($\a=1/4$), and super-critical ($\a<1/4$). Heuristic ideas on finding the correct threshold $\a=1/4$ are discussed in \cite[Section 1.3]{DeyWu23}, and the details are included in the analysis of path clusters in \cite[Section 5.2]{DeyWu23}. 

\begin{theorem}[SK model at high temp, with weak external field \protect{\cite[Theorem 1.3]{DeyWu23}}]\label{thm:deywu23_SK} 
Consider the SK model with Gaussian disorder. Assume that the external field is scaled as $h = \rho N^{-\a}$ for some positive constants $\rho$ and $\a$. Let
\beq
v_1^2 := -\frac12\log(1-\b^2) - \frac{\b^2}{2} \quad \text{and} \quad v_2^2 := \frac{\b^2}{2(1-\b^2)}.
\eeq
We then have the following statements of convergence in distribution. 
\begin{enumerate}
     \item (Sub-critical) For $\a>1/4$ and $\b<1$,
        \[
        N \left(F_N - \left(\log(2\cosh h)+\frac{N-1}{4N}\b^2\right)\right) \to \cN(-\frac12 v_1^2, v_1^2).
        \]
        \item (Critical) For $\a=1/4$ and $\b<1$,
        \[
        N \left(F_N - \left(\log(2\cosh h)+\frac{N-1}{4N}\b^2\right)\right) \to \cN(-\frac12 \left(v_1^2 + \rho^4 v_2^2 \right), v_1^2+\rho^4 v_2^2).
        \]
        \item (Super-critical) There exists $\b_0\leq 1$ such that, for $\a<1/4$ and $\b<\b_0$,
        \[
        \frac{\sqrt{N}}{h^2}\left(F_N - \left(\bE \log(2\cosh(\b\sqrt{q}\eta+h))+\frac14\b^2(1-q)^2\right)\right) \to \cN(0,v_2^2),
        \]
        where $\eta$ is a standard Gaussian random variable and $q\in [0,1]$ is the solution to 
        \[
        q = \bE \tanh^2(\b \sqrt{q}\eta +h).
        \]
    \end{enumerate}
\end{theorem}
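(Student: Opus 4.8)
The plan is to build on the high-temperature cluster expansion of Aizenman--Lebowitz--Ruelle that underlies Theorem \ref{thm:ALR}, modifying it to track how a weak external field reshapes the graph-combinatorial sum. First I would apply the identity $e^{\b g_{ij}\s_i\s_j/\sqrt N}=\cosh(\b g_{ij}/\sqrt N)\bigl(1+\s_i\s_j\tanh(\b g_{ij}/\sqrt N)\bigr)$ and factor out the annealed normalization to write
\beq
Z_N=\Bigl(\prod_{i<j}\cosh(\b g_{ij}/\sqrt N)\Bigr)\sum_{\s}\prod_{i}e^{h\s_i}\prod_{i<j}\bigl(1+\s_i\s_j\,t_{ij}\bigr),\qquad t_{ij}:=\tanh(\b g_{ij}/\sqrt N).
\eeq
Expanding the edge product and performing the spin sum vertex by vertex identifies each surviving term with a subgraph $\Gamma$: an even-degree vertex contributes a factor $2\cosh h$, and an odd-degree vertex a factor $2\sinh h$. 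At $h=0$ only Eulerian subgraphs (disjoint unions of cycles) survive, which is exactly the ALR regime producing the variance $v_1^2$; for $h>0$ the genuinely new contributions are the subgraphs with odd-degree vertices, i.e.\@ those containing open \emph{paths}, with every path endpoint weighted by $\sinh h\approx h$.

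Next I would show that $N(F_N-\bE F_N)$ decomposes asymptotically into two pieces that are jointly Gaussian and asymptotically independent: a field-independent contribution from clusters of cycles, carrying variance $v_1^2$ exactly as in Theorem \ref{thm:ALR}, and a field-dependent contribution from self-avoiding paths. The relevant path functional is $P:=h^2\sum_{i<j}W_{ij}$ with $W_{ij}:=\sum_{\gamma:i\to j}\prod_{e\in\gamma}t_e$, the inner sum running over self-avoiding paths $\gamma$. Since each edge weight has $\bE\,t_e=0$ and $\bE\,t_e^2\approx\b^2/N$, a length-$\ell$ path contributes variance $\approx(\b^2/N)^\ell$, and the number of such paths between a fixed pair is $\approx N^{\ell-1}$, so summing the geometric series in $\ell$ over all endpoint pairs yields $\Var(P)\approx h^4 N\,v_2^2$ with $v_2^2=\tfrac{\b^2}{2(1-\b^2)}=\tfrac12\sum_{k\ge1}\b^{2k}$. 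Hence $P$, which sits inside $\log Z_N=NF_N$, has standard deviation $\approx h^2\sqrt N\,v_2=\rho^2 N^{1/2-2\a}v_2$, while the cycle piece has order-one fluctuations. Comparing the two orders immediately locates the threshold at $\tfrac12-2\a=0$, i.e.\@ $\a=\tfrac14$: for $\a>\tfrac14$ the path piece is negligible and one recovers $v_1^2$ (sub-critical); for $\a=\tfrac14$ the two pieces coexist and, by their asymptotic independence, the variances add to $v_1^2+\rho^4 v_2^2$ (critical); for $\a<\tfrac14$ the path piece dominates, giving the $v_2^2$ statement under the rescaling $\sqrt N/h^2$ (super-critical). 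I would establish the Gaussianity and exact variances by the moment method, computing joint moments of the cycle and path functionals and showing that the leading contributions come from disjoint, simply structured clusters, while graphs with branchings, repeated edges, or shared vertices between distinct clusters are of strictly lower order; this is where $\b<1$ is essential, guaranteeing geometric decay of the combinatorial sums in graph size, and where the vanishing of mixed cycle--path cross-moments yields the asymptotic independence needed in the critical case.

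The main obstacle, I expect, is twofold. On the combinatorial side, the delicate step is a uniform control of the error terms: one must show that multi-path configurations, self-intersections, and the interaction between the path and cycle sums contribute negligibly, which requires careful bookkeeping of vertex and edge overlaps beyond the leading self-avoiding estimate. On the analytic side, the super-critical regime is the most demanding: here the field-dependent contributions are no longer a small perturbation that can be linearized inside the logarithm, so the naive centering $\log(2\cosh h)$ must be replaced by the resummed replica-symmetric free energy $\bE\log(2\cosh(\b\sqrt q\,\eta+h))+\tfrac14\b^2(1-q)^2$ with $q$ the solution of $q=\bE\tanh^2(\b\sqrt q\,\eta+h)$. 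Reconciling the path expansion with this cavity/interpolation formula --- and verifying that only the $v_2^2$ fluctuation survives after subtracting it --- is where the restriction $\b<\b_0\le1$ enters, ensuring both that the replica-symmetric solution is valid and that the path series converges with the required quantitative control.
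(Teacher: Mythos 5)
Your cluster-expansion strategy for the sub-critical and critical regimes is sound and in fact coincides with one of the two methods genuinely present in \cite{DeyWu23}: the threshold $\a=1/4$ is located there exactly as you describe, by weighting each odd-degree vertex of a surviving subgraph with a factor $\tanh h\approx h$ and estimating the variance of the resulting path clusters, which gives $h^4Nv_2^2=\rho^4N^{1-4\a}v_2^2$ against the order-one cycle contribution (this is the content of the heuristic in Section 1.3 and the path-cluster analysis in Section 5.2 of \cite{DeyWu23}). It is, however, not the route emphasized here: for Gaussian disorder all three statements are proved by smart-path interpolation together with a characteristic-function trick, in the spirit of Guerra--Toninelli, Chatterjee, and Chen--Lam, with the overlap controlled via quadratic replica coupling; the cluster expansion is deployed in \cite{DeyWu23} chiefly to extend statements 1 and 2 to non-Gaussian disorder, where the limiting mean and variance then acquire a dependence on $\bE[J_{12}^4]$. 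Roughly, your expansion buys universality in the disorder, while the interpolation buys access to all three regimes at once.

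The genuine gap is in your treatment of the super-critical regime. There $h^2\sqrt N\to\infty$, so the single-path functional $P$ is no longer the dominant field-dependent correction: subgraphs with $2m$ odd-degree vertices (i.e.\ $m$ open paths) carry weight $(\tanh h)^{2m}$ and contribute at comparable or larger order, and one must resum the entire multi-path hierarchy --- equivalently, re-expand around the nonzero replica-symmetric overlap $q$ solving $q=\bE\tanh^2(\b\sqrt q\,\eta+h)$ rather than around $q=0$. You correctly identify the resummed centering, but ``reconciling the path expansion with the cavity/interpolation formula'' is the entire difficulty, not a step; no cluster-expansion proof of statement 3 is carried out in \cite{DeyWu23} (which is precisely why the non-Gaussian universality results stop at the critical regime), and the actual argument is a Gaussian interpolation toward the replica-symmetric solution combined with the characteristic-function method, with the restriction $\b<\b_0\le 1$ arising from the quadratic-coupling control of $\bE\ip{(R_{1,2}-q)^2}$ needed to make the interpolation error negligible at scale $h^2/\sqrt N$. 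To complete your proposal you should switch to that machinery for statement 3 rather than attempt to push the graph expansion through.
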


\begin{remark}
Our notations $v_1^2, v_2^2$ differ slightly from those of \cite{DeyWu23}.
\end{remark}

In all three regimes, the fluctuations are asymptotically Gaussian. The comparison between the zero external field and order-one external field is as follows. The three cases---no external field, sub-critical external field, and critical external field---all have the same fluctuation order $N^{-1}$, while the super-critical regime has fluctuation order $N^{-1/2}$, matching the fluctuation order of the order-one external field case in Theorem \ref{thm:chendeypanchenko}. Furthermore, the statement of the sub-critical external field regime is almost identical to that of the zero external field case (see Theorem \ref{thm:ALR}). Lastly, one can verify that the deterministic shift applied to the free energy in statement 3 is $O(N^{-1})$ from the shift used in the other two regimes, using the fact that $q$ as above is $q \approx \frac{h^2}{1-\b^2}$ as $h\downarrow 0$. 

The methods of smart-path interpolation and characteristic trick, of quite similar flavor to the works \cites{Chatterjee09, Chatterjee2014book, Chatterjee19} of Chatterjee and \cite{ChenLam19} of Chen and Lam, are used to prove the three statements. As we have seen in our discussion of these previous works, the methods rely crucially on the assumption of Gaussian disorder. Notably, an alternative proof for the sub-critical regime for Gaussian disorder
is also provided in \cite{DeyWu23}, showing that the free energy in this regime is close to free energy in the case of no external field. 

\begin{remark}\label{rmk:nongaussian}
    A large part of \cite{DeyWu23} proves the above central limit theorems in the case where the disorder is not necessarily Gaussian. The authors achieve this goal in the sub-critical and critical regimes, using the approach of cluster expansion. In particular, statements 1 and 2 of Theorem \ref{thm:deywu23_SK} hold for non-Gaussian disorder, provided the interaction coefficients $J_{ij}$ are i.i.d.\@ symmetric with unit variance and finite sixth moment. In that case, the means and variances of the limiting Gaussian distributions in the two regimes depend on the fourth moment $\bE[J_{12}^4]$. This assumption on the disorder is similar to the one used in \cite{AizenmanLebowitzRuelle}, in which the authors assume i.i.d.\@ symmetric disorder whose $k$-moment is finite for all $k$, and prove the central limit theorem also via the cluster expansion approach.
\end{remark}

\subsection{Results for SK model with ferromagnetic Curie--Weiss interaction}

Free energy fluctuations have also been studied in the setting where the SK model has the Curie--Weiss interaction term. As introduced in \eqref{eq:CW}, the Hamiltonian in this case is given by
    \[
    H_N(\s) = \frac{1}{2\sqrt{N}}\sum_{i,j=1}^N g_{ij}\s_i \s_j + \frac{J}{2N}\sum_{i,j=1}^N \s_i\s_j
    \]
where $(g_{ij})_{1\leq i, j \leq N}$ are i.i.d.\@ standard Gaussian random variables and $J$ is a positive constant. 

In \cite{Banerjee2020}, Banerjee proves a limit theorem for fluctuations of the free energy in the \textit{paramagnetic} regime ($\b<1$ and $\b J <1$), in which the system is at high temperature and has weak ferromagnetic strength. In this regime, the free energy is asymptotically Gaussian, with fluctuations of order $N^{-1}$. We note that \cite{Banerjee2020} uses a slightly different convention for the Hamiltonian, and the following theorem has been adjusted to follow our definition. 

\begin{theorem}[SK model with CW interaction \protect{\cite[Theorem 2.1]{Banerjee2020}}]
        For $\b<1$ and $\b J <1$, we have the convergence in distribution as $N \to \infty$:
        \[
        N(F_N(\b) -F(\b)) \to \cN(f_1,\a_1)
        \]
        where $F(\b)=\frac14\b^2$ and
        \begin{align*}
            \a_1 &= -\frac14\b^2 - \frac12 \log(1-\b^2),\\
            f_1 &= -\frac12\log(1-\b J) + \frac14\log(1-\b^2).
        \end{align*}
    \end{theorem}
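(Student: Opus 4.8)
\emph{Strategy.} The plan is to remove the Curie--Weiss term by a Hubbard--Stratonovich (Gaussian linearization) transformation, reducing the problem to the SK model in a weak external field, and then to combine the high-temperature SK fluctuation result (Theorem \ref{thm:ALR}) with an explicit Gaussian integral. Writing $S := \sum_{i=1}^N \s_i$, the ferromagnetic term equals $\frac{\b J}{2N}S^2$, and the identity $e^{\frac{\b J}{2N}S^2} = \sqrt{\frac{N}{2\pi \b J}}\int_\R e^{-\frac{N x^2}{2\b J} + xS}\,\dd x$ gives
\[
Z_N = \sqrt{\frac{N}{2\pi\b J}}\int_\R e^{-\frac{Nx^2}{2\b J}}\, Z_N^{SK}(\b, x/\b)\,\dd x ,
\]
where $Z_N^{SK}(\b,h)$ is the partition function of the pure SK model with deterministic all-ones external field of strength $h$. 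The Gaussian weight concentrates $x$ at scale $N^{-1/2}$, so after the substitution $x = s/\sqrt N$ the effective field is of order $N^{-1/2}$; factoring out the field-free partition function then gives $N F_N = \log Z_N^{SK}(\b,0) + \log I_N$ with
\[
I_N = \frac{1}{\sqrt{2\pi\b J}}\int_\R e^{-\frac{s^2}{2\b J}}\,\Big\langle e^{\frac{s}{\sqrt N}S}\Big\rangle_0\,\dd s ,
\]
and $\langle\cdot\rangle_0$ the Gibbs average of the field-free SK model.

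\emph{The susceptibility and the Gaussian integral.} Next I would analyze $I_N$. By the spin-flip symmetry $\s\mapsto-\s$ of the SK Hamiltonian, $\langle S\rangle_0 = 0$ and $\langle e^{\frac s{\sqrt N}S}\rangle_0 = \langle\cosh(\tfrac s{\sqrt N}S)\rangle_0$ is even in $s$, so $\log\langle e^{\frac s{\sqrt N}S}\rangle_0 = \tfrac{s^2}{2}\chi_N + O(s^4\kappa_4/N^2)$, where $\chi_N := \frac1N\langle S^2\rangle_0$ is the uniform susceptibility and $\kappa_4$ the fourth thermal cumulant of $S$. The crucial point is that $\chi_N$ concentrates at $1$: the gauge transformation $\s_i\mapsto\epsilon_i\s_i$, $g_{ij}\mapsto\epsilon_i\epsilon_j g_{ij}$ (with $\epsilon_i\in\{\pm1\}$) preserves both the law of the disorder and the Hamiltonian while sending $\langle\s_i\s_j\rangle_0\mapsto\epsilon_i\epsilon_j\langle\s_i\s_j\rangle_0$, which forces $\E\langle\s_i\s_j\rangle_0 = 0$ for $i\neq j$ and hence $\E\chi_N = 1$ exactly; a second-moment computation gives $\Var(\chi_N)=O(N^{-1})$. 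In the high-temperature phase $S/\sqrt N$ is asymptotically Gaussian (the excess kurtosis $\kappa_4/N^2\to0$), so $\langle e^{\frac s{\sqrt N}S}\rangle_0\to e^{s^2/2}$ and the integral evaluates to $I_N\to\frac1{\sqrt{2\pi\b J}}\int_\R e^{-\frac{s^2}{2}(\frac1{\b J}-1)}\,\dd s = (1-\b J)^{-1/2}$. Thus $\log I_N\to-\tfrac12\log(1-\b J)$ deterministically, with fluctuations of order $\Var(\chi_N)^{1/2}=O(N^{-1/2})$ that vanish in the limit; the condition $\b J<1$ is exactly what keeps $\tfrac1{\b J}-\chi_N>0$ and the integral convergent.

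\emph{Assembling the CLT.} It remains to combine the two pieces, after first peeling off the diagonal. Since the diagonal disorder enters the Hamiltonian only through the $\s$-independent constant $\frac1{2\sqrt N}\sum_i g_{ii}$, it factors out of $Z_N$ as an \emph{independent} summand $\frac{\b}{2\sqrt N}\sum_i g_{ii}\to\cN(0,\b^2/4)$ of $NF_N$, leaving a zero-diagonal SK partition function to which Theorem \ref{thm:ALR} applies directly. Adding the independent diagonal Gaussian of variance $\b^2/4$ to the ALR variance yields total variance $\big(-\tfrac12\log(1-\b^2)-\tfrac{\b^2}2\big)+\tfrac{\b^2}4 = \a_1$, while $\log I_N$ only shifts the mean by $-\tfrac12\log(1-\b J)$. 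Tracking the remaining deterministic centering (the $\log 2$ absorbed by the normalization $F(\b)=\tfrac14\b^2$, and the precise $\tfrac{N-1}4$ term in $\log\E Z_N^{SK}$) produces the stated $f_1 = -\tfrac12\log(1-\b J)+\tfrac14\log(1-\b^2)$. As the three contributions are asymptotically jointly Gaussian, so is the limit.

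\emph{Main obstacle.} The genuine difficulty is making the Gaussian (Laplace) approximation of $I_N$ rigorous \emph{uniformly in the integration variable}: the quadratic expansion of $\log\langle e^{\frac s{\sqrt N}S}\rangle_0$ is justified only for bounded $s$, whereas the integral runs over all of $\R$. To exchange limit and integral one needs an a priori bound $\langle e^{\frac s{\sqrt N}S}\rangle_0\le e^{c s^2}$ with $c<\frac1{2\b J}$ holding uniformly in $N$ and $s$ --- equivalently, uniform control of the tilted susceptibility $\frac1N\Var_{0,h}(S)$ for fields $h$ up to order one. Establishing this bound, together with the concentration $\chi_N\to1$, is precisely where the paramagnetic hypotheses $\b<1$ and $\b J<1$ enter; these are the high-temperature inputs that here replace the random-matrix machinery available in the spherical case.
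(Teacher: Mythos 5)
Your route is genuinely different from the one in \cite{Banerjee2020} that the survey describes. Banerjee does not linearize the Curie--Weiss term: there, the partition function normalized by its expectation is written as a Radon--Nikodym derivative between the laws induced by the disorder matrix and by the same matrix shifted by the rank-one mean $\frac{J}{2N}\mathbf{1}\mathbf{1}^T$, and this likelihood ratio is approximated by signed cycles whose asymptotic Gaussianity is inherited from the CLT for linear spectral statistics of a (zero-diagonal) GOE matrix. That method treats the disorder and ferromagnetic contributions in one stroke and produces the $-\frac12\log(1-\b J)$ term directly from the spectral statistics of the shifted ensemble. Your Hubbard--Stratonovich decomposition instead splits the problem into the field-free Theorem \ref{thm:ALR} plus a one-dimensional Laplace-type integral $I_N$; this is conceptually transparent and correctly predicts that the effective field lives at the sub-critical scale $h\sim N^{-1/2}$ (so that, consistently with \cite{DeyWu23}, it should not alter the fluctuation order), but it is not the paper's argument.

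The proposal also has a genuine gap, and it sits exactly where you flag it --- but it is larger than a ``technical obstacle.'' Evaluating $I_N$ requires both the pointwise convergence $\langle e^{sS/\sqrt N}\rangle_0\to e^{s^2/2}$ and a dominating bound $\langle e^{sS/\sqrt N}\rangle_0\le e^{cs^2}$ with $c<\frac{1}{2\b J}$, uniformly in $N$ and $s\in\R$. Your gauge-symmetry computation gives only $\E\chi_N=1$ and $\Var(\chi_N)=O(N^{-1})$ (the latter already requires the nontrivial high-temperature bound $\E\ip{R_{1,2}^2}\le C/N$); this controls the second moment of $S$ under the Gibbs measure but says nothing about the moment generating function at tilts with $|s|\sim\sqrt N$, i.e.\ external fields of order one. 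The crude annealed bound $Z_N^{SK}(\b,h)\le e^{o(N)}\,\E Z_N^{SK}(\b,h)$ w.h.p.\ leaves an $e^{o(N)}$ prefactor that overwhelms the Gaussian weight throughout the intermediate range $1\ll s^2\ll N$, so one needs quantitative, $h$-uniform replica-symmetric control of $\log Z_N^{SK}(\b,h)-\log Z_N^{SK}(\b,0)$ --- essentially the quadratic-coupling/cavity machinery discussed around \cite{ChenLam19} and \cite{DeyWu23}. Until that input is supplied, $\log I_N\to-\frac12\log(1-\b J)$ is a heuristic, not a proof. A secondary issue is the mean: with centering $N(\log 2+\b^2/4)$, the zero-diagonal ALR limit contributes mean $\frac{\b^2}{4}+\frac14\log(1-\b^2)$ and the diagonal Gaussian contributes mean $0$, giving $f_1+\frac{\b^2}{4}$ rather than $f_1$; whether this $\frac{\b^2}{4}$ is absorbed by the precise choice of centering must be tracked explicitly rather than asserted.
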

The proof relies on analyzing the so-called signed cycles. In contrast to cycles arising from cluster expansion as in \cites{AizenmanLebowitzRuelle, DeyWu23}, the cycles in \cite{Banerjee2020} arise due to a refined second moment method devised by the author. In particular, the partition function, rescaled by its expectation, is formulated as a Radon-Nikodym derivative of two probability measures, depending on $N$. In the language of statistics, they rewrite the free energy as a log-likelihood ratio of two measures $\mathbb{P}_N,\mathbb{Q}_N,$ that are induced by the random matrices with entries $\frac{g_{ij}}{2\sqrt{N}}$ and $\frac{g_{ij}}{2\sqrt{N}}+\frac{J}{2N}$ respectively. The Radon-Nikodym derivative (likelihood ratio) is then approximated by the (random) signed cycles, which are shown to be asymptotically Gaussian under the two measures since the cycles closely approximate the fluctuations of linear spectral statistics of a GOE matrix (with zero diagonal entries). This approach is used in a subsequent work to derive the free energy fluctuations of the mixed $p$-spin SK model (with non-zero 2-spin term) at sufficiently high temperature \cite{Banerjee2021}.

\section{Free energy fluctuations in the SSK model}\label{sec:SSK}

We saw in the previous section that the fluctuations of the SK model are much better understood at high temperature than at low temperature.  In contrast, the fluctuations of the SSK are well understood at all temperatures.  One sees Gaussian fluctuations at high temperature (as in the SK model). At low temperature, the fluctuations follow the GOE Tracy--Widom distribution.\footnote{The GOE Tracy--Widom distribution arises in a variety of settings, notably as the rescaled fluctuations of the largest eigenvalue of a GOE matrix.} Finally, the free energy fluctuations at critical temperature converge to a sum that interpolates between the two distributions.  

The successful analysis of this model is due to the fact that many questions about SSK can be directly translated into questions about the eigenvalues of the underlying random matrix.  Drawing on the extensive random matrix literature, this unlocks many additional tools for analyzing SSK.

\paragraph{Contour integral representation of the partition function}
The starting point for much of the recent progress on SSK fluctuations is a contour integral representation for the partition function $Z_N$.  This representation first appeared in the paper of Kosterlitz, Thouless, and Jones \cite{kosterlitz1976spherical}.  More recently, Baik and Lee \cite{BL16} demonstrate that it can be used to prove precise asymptotic formulas for the distribution of the SSK free energy in both the high and low temperature regimes.  Since then, the contour integral representation has been used in numerous papers analyzing the SSK model and its variants \cite{BL16,BL17,BLW18,BL20,Baiketal21,CW21,CWL_BSSK,JKOP2,landon2020fluctuations,LS22,Landon22,nguyen2018central}.

Below, we present the contour integral representation of the SSK partition function along with its short proof, reproduced from \cite{BL16}.  Similar representations can be derived for the SSK with external field, SSK with Curie-Weiss interaction, and bipartite SSK.  The contour integral approach can be used to study overlaps as well as the free energy.

\begin{lemma}[Contour integral for $Z_N$]\label{lem:contourintegral} Given a symmetric matrix $M$ with eigenvalues $\la_1\geq\cdots\geq\la_N$, the partition function $Z_N$ with the corresponding Hamiltonian $H(\s) =\frac12\s^TM\s$ can be expressed as
\beq
Z_N=C_N\int_{\g-\ii\infty}^{\g+\ii\infty}e^{\frac{N}{2}\cG(z)}\dd z
\eeq
where the contour is a vertical line crossing the real axis at any $\g>\la_1$ and
\beq
\cG(z)=\b z-\frac1N\sum_{i=1}^N\log(z-\la_i),\qquad C_N=\frac{\Gamma(N/2)}{2\pi\ii(N\b/2)^{N/2-1}}.
\eeq
\end{lemma}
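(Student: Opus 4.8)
The plan is to use the orthogonal invariance of $\omega_N$ to reduce to a diagonal matrix, and then to identify the constrained spherical integral as the inverse Laplace transform of an explicit full-space Gaussian integral. First I would write $M=O^T\diag(\la_1,\dots,\la_N)O$ with $O$ orthogonal and substitute $\s\mapsto O\s$, which preserves both $\|\s\|$ and the measure $\omega_N$, so that the Hamiltonian becomes $\frac{\b}{2}\sum_{i=1}^N\la_i\s_i^2$. It then suffices to treat the diagonal case, and writing $\omega_N$ as normalized surface measure,
\[
Z_N=\frac{1}{|S_{N-1}|}\int_{\|\s\|^2=N}\exp\Big(\tfrac{\b}{2}\sum_{i=1}^N\la_i\s_i^2\Big)\,\dd\Omega(\s),\qquad |S_{N-1}|=\frac{2\pi^{N/2}}{\Gamma(N/2)}N^{(N-1)/2},
\]
where $\dd\Omega$ is the (unnormalized) surface measure on the sphere of radius $\sqrt N$.

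The key identity is the Gaussian integral: for any $z\in\C$ with $\re z>\la_1$, each factor $z-\la_i$ lies in the open right half-plane and
\[
\int_{\RR^N}\exp\Big(\tfrac{\b}{2}\sum_{i=1}^N\la_i\s_i^2-\tfrac{\b z}{2}\|\s\|^2\Big)\,\dd\s=\Big(\tfrac{2\pi}{\b}\Big)^{N/2}\prod_{i=1}^N(z-\la_i)^{-1/2}.
\]
Passing to polar coordinates and setting $t=\|\s\|^2$, the left-hand side is exactly the Laplace transform, in the variable $s=\b z/2$, of the radial profile $t\mapsto\frac{1}{2\sqrt t}\int_{\|\s\|^2=t}e^{\frac{\b}{2}\sum_i\la_i\s_i^2}\dd\Omega(\s)$. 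Inverting this transform by the Bromwich integral along the vertical line $\re z=\g>\la_1$ and evaluating at $t=N$ reproduces the spherical integral above; since the exponent equals $\frac{\b zN}{2}-\frac12\sum_{i=1}^N\log(z-\la_i)=\frac{N}{2}\cG(z)$, this yields the stated contour integral, and collecting the prefactor $(2\pi/\b)^{N/2}$, the radial Jacobian $2\sqrt N$, the inversion constant $\b/(4\pi\ii)$, and $|S_{N-1}|^{-1}$ simplifies to $C_N=\Gamma(N/2)/(2\pi\ii(N\b/2)^{N/2-1})$.

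Equivalently, and this is how the representation is usually derived, one inserts the Fourier--Laplace form of the constraint, $\delta(\|\s\|^2-N)=\frac{\b}{4\pi\ii}\int_{\g-\ii\infty}^{\g+\ii\infty}e^{-\frac{\b z}{2}(\|\s\|^2-N)}\,\dd z$, into the spherical integral and interchanges the order of integration. The main obstacle is precisely this interchange: the modulus of the joint integrand is independent of $\im z$, so it fails to be integrable over $\RR^N\times(\g-\ii\infty,\g+\ii\infty)$ and Fubini does not apply verbatim---convergence of the $z$-integral hinges on oscillation rather than absolute integrability. Routing the argument through Laplace inversion avoids this, reducing everything to the standard hypotheses for the Bromwich formula (the radial profile is continuous and of exponential order $e^{\b\la_1 t/2}$, and its transform is analytic for $\re z>\la_1$); what then remains is the bookkeeping of constants and verifying that the choice $\g>\la_1$ simultaneously guarantees convergence of the Gaussian integral and analyticity of the integrand to the right of the contour.
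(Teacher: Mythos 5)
Your argument is correct and is essentially the paper's own proof: both diagonalize $M$, recognize that the spherical integral is the inverse Laplace transform of a full-space Gaussian integral $\prod_i(z-\la_i)^{-1/2}$ (up to explicit constants), and invert along a Bromwich line with $\g>\la_1$. The only cosmetic difference is that you take the Laplace transform in the squared-radius variable while the paper transforms $J(t)=t^{N/2-1}I(t)$ in the temperature-like variable $t$ and then substitutes $t=r^2$, $y=rx$ --- after that substitution the two computations coincide, and your constant bookkeeping reproduces the stated $C_N$.
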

\begin{proof}
Let $\Lambda=\diag(\la_1,\cdots\la_N)$ and $O$ be an orthogonal matrix satisfying $M=O\Lambda O^T$.  Then, using the change of variable $x=\frac{1}{\sqrt{N}}\sigma$, we have
\beq
Z_N=\frac{1}{|S^{N-1}(1)|}\int_{S^{N-1}(1)}\exp\left(\frac{\b N}{2}\sum_{i=1}^N\la_i x_i^2\right)\dd\Omega_{N-1}(x)
\eeq
where $S^{N-1}(1)$ denotes the sphere in $\RR^N$ of radius 1 and $\Omega_{N-1}$ denotes its surface area measure.  We can express this as
\beq
Z_N=\frac{I(\frac{\b N}{2})}{|S^{N-1}(1)|},\qquad\text{where }I(t)=\int_{S^{N-1}(1)}\exp\left(t\sum_{i=1}^N\la_i x_i^2\right)\dd\Omega_{N-1}(x).
\eeq
Now we consider the related quantity $J(t)=t^{\frac{N}{2}-1}I(t)$.  The key observation is that the Laplace transform of $J(t)$ is a Gaussian integral that can be evaluated in closed form.  By performing this integration and then taking the inverse Laplace transform, one obtains a simplified expression for $J(t)$ and thus $Z_N$.  More specifically, the Laplace transform of $J(t)$ is
\beq
L(z)=\int_0^\infty e^{-zt}J(t)\dd t=\int_0^\infty t^{\frac{N}{2}-1}\int_{S^{N-1}(1)}\exp\left(-t\sum_{i=1}^N(z-\la_i)x_i^2\right)\dd\Omega_{N-1}(x)\dd t
\eeq
Using the change of variable $t=r^2$ and $y=rx$ where $t,r\geq 0$, we convert from spherical to Cartesian coordinates and obtain
\beq
L(z)=2\int_{\RR^N}\exp\left(-\sum_{i=1}^N(z-\la_i)y_i^2\right)\dd^Ny=2\prod_{i=1}^N\sqrt{\frac{\pi}{z-\la_i}}.
\eeq
Taking the inverse Laplace transform of this expression and requiring that $\mathrm{Re} (z) >\la_1$, we arrive at a contour integral representation for $J(t)$.  Evaluating at $t=\frac{N\b}{2}$, one can obtain the contour integral representation for $Z_N$.
\end{proof}

\subsection{Results for SSK model without external field}
Baik and Lee prove that the SSK has Gaussian fluctuations of order $N^{-1}$ at high temperature, but GOE Tracy--Widom fluctuations of order $N^{-2/3}$ at low temperature \cite{BL16}.  A more precise statement of their results is given below.  While this survey focuses on the SSK model with a GOE interaction matrix, Baik and Lee actually prove their result for a more general class of interaction matrices (see \cite{BL16} for further details).  Note that their paper uses a slightly different scaling of $\b$, but we have written the theorems to be consistent with our notation.

\begin{theorem}[SSK at high and low temperature \cite{BL16}] 
When no external field is present ($h=0$), the free energy of SSK exhibits the following convergence in distribution as $N\to\infty$.
\begin{enumerate}[(i)]
\item In the high temperature regime, $\b<1$,
\beq
N(F_N(\b)-F(\b))\to\cN(-\frac12 \s^2,\s^2)
\eeq
where
$\s^2=-\frac12\log(1-\b^2)$.  

\item In the low temperature regime, $\b>1$,
\beq
\frac{2N^{2/3}}{\b-1}(F_N(\b)-F(\b))\to TW_1.
\eeq
\end{enumerate} 
The limiting free energy $F(\b)$ in this setting is given by the formula
\beq\label{eq:limitingF_SSK}
F(\b)=\begin{cases}
\frac14\b^2& \text{for }\b\leq1,\\
\b-\frac12\log\b-\frac34 & \text{for }\b\geq1.
\end{cases}
\eeq
\end{theorem}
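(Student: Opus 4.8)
The plan is to analyze the contour integral of Lemma~\ref{lem:contourintegral} by the method of steepest descent, using that the interaction matrix is GOE and its eigenvalues obey the semicircle law. Writing $m_{\mathrm{sc}}(z)=\int(z-\la)^{-1}\,\dd\mu_{\mathrm{sc}}(\la)=\tfrac12\bigl(z-\sqrt{z^2-4}\bigr)$ for the Stieltjes transform of the semicircle measure $\mu_{\mathrm{sc}}$ on $[-2,2]$, the saddle equation $\cG'(z)=0$ reads $\b=\frac1N\sum_i(z-\la_i)^{-1}$, whose deterministic limit is $\b=m_{\mathrm{sc}}(z)$. Since $m_{\mathrm{sc}}$ decreases from $m_{\mathrm{sc}}(2^+)=1$ to $0$ on $(2,\infty)$, this limiting equation has a solution $z_*>2$ strictly outside the spectrum precisely when $\b<1$, whereas for $\b>1$ the finite-$N$ saddle $z_c>\la_1$ is forced to pinch the spectral edge as $N\to\infty$. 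This dichotomy is exactly the source of the high/low temperature transition.

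For the high temperature regime I would deform the vertical contour through the limiting saddle $z_*>2$ and apply the Laplace approximation in the imaginary direction, where $\re\cG$ has a nondegenerate maximum since $\cG''(z_*)=\int(z_*-\la)^{-2}\,\dd\mu_{\mathrm{sc}}>0$. Combining the Stirling asymptotics of $C_N$, for which $\frac1N\log C_N\to-\tfrac12-\tfrac12\log\b$, with $\tfrac12\cG(z_*)$ yields the deterministic limit $F(\b)=\tfrac14\b^2$. The $O(1)$ fluctuations of $NF_N$ come entirely from $\tfrac N2\cG(z_c)$; because $z_c$ is a saddle its implicit dependence on the eigenvalues is second order, so to leading order
\[
N\bigl(F_N(\b)-\E F_N(\b)\bigr)=-\tfrac12\Bigl(\textstyle\sum_i\log(z_*-\la_i)-\E\sum_i\log(z_*-\la_i)\Bigr)+o(1).
\]
The right-hand side is a centered linear eigenvalue statistic for $f(\la)=\log(z_*-\la)$, asymptotically Gaussian by the CLT for linear spectral statistics of GOE; evaluating its limiting variance $V$ for this $f$ gives $\tfrac14 V=-\tfrac12\log(1-\b^2)=\s^2$, while the gap between the annealed and quenched normalizations (equivalently the $O(1)$ correction to the mean of the statistic) supplies the lognormal shift $-\tfrac12\s^2$.

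For the low temperature regime the saddle pinches the edge: balancing $\b=\frac1{N(z_c-\la_1)}+\frac1N\sum_{i\ge2}(z_c-\la_i)^{-1}$ against $\frac1N\sum_{i\ge2}(z_c-\la_i)^{-1}\to m_{\mathrm{sc}}(2)=1$ shows $z_c-\la_1\approx\frac1{(\b-1)N}$. The leading free energy is then $\cG$ evaluated at $z=2$, giving $F(\b)=\b-\tfrac12\log\b-\tfrac34$. For the fluctuations, the envelope identity $\frac{\partial}{\partial\la_1}\bigl[\tfrac N2\cG(z_c)\bigr]=\frac{1}{2(z_c-\la_1)}\approx\frac{(\b-1)N}2$ shows that $NF_N$ responds linearly to $\la_1$ with a large slope, so the $N^{-2/3}$-scale fluctuation of the top eigenvalue, $N^{2/3}(\la_1-2)\to TW_1$, is amplified into $N^{2/3}(F_N-F)\to\tfrac{\b-1}2\,TW_1$, i.e.\ $\frac{2N^{2/3}}{\b-1}(F_N-F)\to TW_1$; the bulk linear statistic contributes only at the negligible order $N^{-1}$.

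The main obstacle is the rigorous low temperature analysis near the pinched saddle. The delicacy is that the saddle sits at distance $O(1/N)$ from $\la_1$ while $\la_1$ itself fluctuates on the larger scale $O(N^{-2/3})$, so the contour must be chosen adaptively and the linearization in $\la_1$ justified uniformly over this range despite relative corrections of order $N^{-1/3}$ in the slope. This requires precise edge inputs from random matrix theory---rigidity of the eigenvalues near $2$ to control $\sum_{i\ge2}\log(z_c-\la_i)$, and the Tracy--Widom limit for $\la_1$---together with estimates showing the contribution away from the edge is subdominant. By comparison, the high temperature case is more routine once one imports the CLT for linear spectral statistics, the only subtlety being control of the random saddle location, which the saddle-point condition renders harmless at leading order.
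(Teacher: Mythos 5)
Your proposal follows essentially the same route as the paper (and the underlying work of Baik--Lee): the contour integral representation of Lemma~\ref{lem:contourintegral} plus steepest descent, with the high-temperature Gaussian limit coming from the CLT for linear spectral statistics at a deterministic saddle $z_*>2$ solving $\b=m_{\mathrm{sc}}(z_*)$, and the low-temperature Tracy--Widom limit coming from the saddle pinching $\la_1$ at distance $O(1/N)$ so that the fluctuations of $\la_1$ dominate. Your computations of the saddle location, the limiting free energy, the variance $\s^2=-\frac12\log(1-\b^2)$, and the slope $\frac{\b-1}{2}$ in the low-temperature regime all match, and you correctly identify the delicate point (adaptive contour near the random edge, eigenvalue rigidity) that the rigorous proof must address.
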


We note that the formula for $\s^2$ is slightly different if one takes the disorder matrix to have zero on the diagonal or to have non-Gaussian entries.

\paragraph{Rough heuristic} The proof method for this theorem uses Lemma \ref{lem:contourintegral} along with a delicate steepest descent analysis.  
\begin{itemize}
\item At high temperature, the saddle point $\gamma$ of the function $\cG$ in Lemma \ref{lem:contourintegral} is well approximated by a deterministic quantity, away from the random spectrum.  Thus, $\cG(\gamma)$ has Gaussian fluctuations given by the CLT for linear statistics of random matrices \cite{Johansson98}.
\item At low temperature, $\gamma$ is extremely close to the largest eigenvalue $\la_1$, so the behavior of the free energy is controlled by the fluctuations of $\la_1$, which follow the GOE Tracy--Widom distribution.
\end{itemize}
\paragraph{Critical temperature results} The dramatic contrast between the high- and low-temperature regimes in SSK lead to an obvious question of what happens in the transition between the two.  This is answered in two papers, which appear around the same time, by Landon \cite{Landon22} and by Johnstone, Klochkov, Onatski, and Pavlyshyn \cite{JKOP2}.  The theorem below is the formulation that appears in \cite{JKOP2}.  We present it in the setting of SSK with GOE interaction, although the paper generalizes to Wigner interaction matrices (with suitable moment conditions).  In addition, the paper includes a treatment of the case with a ferromagnetic interaction (see Section \ref{subsec:SSKferro}).

\begin{theorem}[SSK at critical temperature \cite{JKOP2}]\label{thm:SSK_crit}
When no external field is present, and the inverse temperature is $\b=1+bN^{-1/3}\sqrt{\log N}$ for some fixed $b\in\RR$, the free energy of SSK exhibits the following convergence in distribution as $N\to\infty$.
\beq
\frac{N}{\sqrt{\frac16\log N}}\left(F_N(\b)-F(\b)+\frac{\log N}{12N}\right)\to\cN(0,1)+\sqrt{\frac32}\max\{0,b\}\TW_1
\eeq
\end{theorem}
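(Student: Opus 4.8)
The plan is to build on the contour integral of Lemma~\ref{lem:contourintegral} and run a steepest-descent analysis finely tuned to the critical window $\b=1+bN^{-1/3}\sqrt{\log N}$. Writing $s_N(z)=\frac1N\sum_{i=1}^N\frac1{z-\la_i}$ for the Stieltjes transform of the empirical spectral measure, we have $\cG'(z)=\b-s_N(z)$, so the unique real saddle $\hat z>\la_1$ solves $s_N(\hat z)=\b$. Evaluating the vertical contour integral by Laplace's method around $\hat z$ gives
\[
F_N(\b)=\frac12\cG(\hat z)-\frac1{2N}\log\!\bigl(N\cG''(\hat z)\bigr)+\frac1N\log C_N+o\!\left(\frac{\sqrt{\log N}}{N}\right),
\]
so the task reduces to the asymptotics of $\cG(\hat z)$, together with the subleading term, as a functional of the eigenvalues near the edge $\la_1\approx 2$.

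First I would locate $\hat z$ and identify the dichotomy encoded by $\max\{0,b\}$. Using the edge expansion $s(z)=1-\sqrt{z-2}+O(z-2)$ of the semicircle Stieltjes transform, the deterministic equation $s(\hat z)=\b$ has a solution $\hat z-2\approx b^2N^{-2/3}\log N$ \emph{only} when $b<0$; in that case $\hat z$ sits a factor $\sqrt{\log N}$ above the edge-fluctuation scale $N^{-2/3}$, the saddle is separated from the spectrum, and the analysis mirrors the high-temperature regime. When $b>0$ there is no such real solution, and the saddle is instead pinned within distance $\sim (N(\b-1))^{-1}=(bN^{2/3}\sqrt{\log N})^{-1}$ of $\la_1$, where the singular term $\frac1N\log(z-\la_1)$ in $\cG$ is what forces $s_N(\hat z)=\b$; here $\cG(\hat z)$ becomes sensitive to the fluctuation of $\la_1$. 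This is exactly the mechanism producing the indicator $\max\{0,b\}$ in front of the Tracy--Widom term.

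Next I would extract the Gaussian term. Splitting off $\la_1$, the bulk contribution $\frac1N\sum_{i\geq2}\log(\hat z-\la_i)$ is a linear eigenvalue statistic of the (nearly edge) test function $x\mapsto\log(\hat z-x)$, whose fluctuations are asymptotically Gaussian by the CLT for linear statistics of GOE \cite{Johansson98}. The key point is that, because $\hat z\to 2$ as $\b\to1$, the variance of this statistic diverges; quantitatively it behaves like $-\tfrac12\log(1-\b^2)\approx\tfrac16\log N$, matching the high-temperature variance $-\tfrac12\log(1-\b^2)$ extrapolated into the critical window and explaining both the normalization $N/\sqrt{\tfrac16\log N}$ and the limiting $\cN(0,1)$. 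The deterministic pieces of $\cG(\hat z)$ together with the $-\frac1{2N}\log(N\cG'')$ correction should reproduce $F(\b)$ and the shift $\frac{\log N}{12N}$.

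Finally I would handle the Tracy--Widom term and the independence. In the pinned regime $b>0$, expanding $\cG(\hat z)$ in the gap $\hat z-\la_1$ and using $\la_1=2+N^{-2/3}\chi$, with $\chi$ converging to a rescaled GOE Tracy--Widom law, shows that the $\la_1$-dependent part of $NF_N$ contributes a term proportional to $b\,\TW_1$; tracking constants should yield the factor $\sqrt{3/2}$. Since linear eigenvalue statistics of the bulk are asymptotically independent of the extreme eigenvalue, the Gaussian and Tracy--Widom contributions decouple in the limit, giving $\cN(0,1)+\sqrt{3/2}\max\{0,b\}\TW_1$ with independent summands. The main obstacle is the uniform steepest-descent estimate across the transition: when $\hat z$ approaches $\la_1$ the naive quadratic (Gaussian) approximation of the contour integral degrades, since the saddle nears the logarithmic singularity, so one must carry out a careful local analysis at the edge---controlling the error uniformly in $b$ and simultaneously coupling the edge fluctuation of $\la_1$ with the bulk linear-statistics CLT. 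Establishing this \emph{joint} convergence at the correct rate $N^{-1}\sqrt{\log N}$, rather than each contribution separately, is the technical heart of the argument.
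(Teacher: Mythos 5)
Your overall route---the contour integral of Lemma~\ref{lem:contourintegral}, steepest descent at a saddle $\hat z$ solving $s_N(\hat z)=\b$, the dichotomy between a saddle separated from the spectrum ($b<0$) and a saddle pinned at distance $\sim(N(\b-1))^{-1}$ from $\la_1$ ($b>0$), and the resulting decomposition into a Gaussian linear-statistics piece plus a $\la_1$-driven Tracy--Widom piece---is the same strategy used in \cite{JKOP2} (and \cite{Landon22}), and your bookkeeping of the constants ($\tfrac16\log N$ variance, the factor $\sqrt{3/2}$) is consistent.

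The genuine gap is your appeal to the standard CLT for linear eigenvalue statistics \cite{Johansson98} to produce the $\cN(0,1)$ term. In the critical window the test function is $x\mapsto\log(\hat z-x)$ with $\hat z-2=O(N^{-2/3}\log N)$, i.e.\ its logarithmic singularity sits essentially \emph{at} the spectral edge, on the scale of the eigenvalue fluctuations. Johansson's theorem applies to test functions that are regular on (a neighborhood of) the spectrum and, in particular, have bounded limiting variance; here the variance diverges like $\tfrac16\log N$ precisely because the singularity collides with the edge, and the fluctuation of $\sum_i\log(\hat z-\la_i)$ is no longer governed by the bulk but by the top $O(N^{\e})$ eigenvalues. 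What is actually needed is an ``edge CLT'' for $\sum_i\log(z-\la_i)$ with $z$ near $2$, established jointly with the rescaled top eigenvalues so that the asymptotic independence of the Gaussian and $\TW_1$ components (which you assert as a known fact about bulk statistics versus extreme eigenvalues) comes out as a conclusion rather than an assumption; this is the content of \cite{JKOP1,LambertPaquette21,AugeriButezZeitouni} and is the key new random-matrix input behind the theorem. Your final paragraph correctly identifies the joint convergence as ``the technical heart,'' but proposing to obtain it by error control layered on the standard CLT restates the difficulty rather than resolving it: without the edge CLT the central normalization $N/\sqrt{\tfrac16\log N}$, the shift $\tfrac{\log N}{12N}$, and the independence of the two limit summands are all unproven.
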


The paper \cite{Landon22} also analyzes the free energy for $\b=1+bN^{-1/3}\sqrt{\log N}$, but with a slightly different focus than that of \cite{JKOP2}.  In addition to the case of fixed $b$, \cite{Landon22} considers $b\to\infty$ and $b\to-\infty$ at rates slower than $N^{1/3}/\sqrt{\log N}$ and shows that the fluctuations in these two cases are Tracy-Widom and Gaussian, respectively.  On the other hand, for the case of fixed $b>0$, \cite{Landon22} proves only tightness rather than the interpolating limit.

As with the results for the high- and low-temperature regimes, Theorem \ref{thm:SSK_crit} relies heavily on random matrix theory.  In particular, one needs to understand the distribution of the sum appearing in the function $G$ in Lemma \ref{lem:contourintegral}, as $z$ approaches the spectral edge.  In this case, one can no longer use the general CLT for linear statistics of random matrix eigenvalues, but rather an ``edge CLT'' is needed (see \cite{JKOP1,LambertPaquette21,AugeriButezZeitouni}).

\subsection{Results for SSK model with external field}

As with the SK model, the free energy fluctuations of SSK with an external field are Gaussian of order $N^{-1/2}$.  Chen, Dey, and Panchenko \cite{ChenDeyPanchenko} prove this in the SK context as part of a more general result for mixed $p$-spin models with even $p$ (see Section \ref{sec:SK+ext} for more a more detailed description of \cite{ChenDeyPanchenko}).  They indicate that a similar approach should extend the result to the spherical case.  This finding was confirmed via a contour integral method \cite{Baiketal21,landon2020fluctuations}.  In the zero-temperature setting, Chen and Sen derive the ground-state energy fluctuations for the general mixed-$p$ spherical model, which includes SSK \cite{ChenSen}. 

\begin{theorem}[SSK with external field \cite{ChenDeyPanchenko,Baiketal21,landon2020fluctuations}]
For fixed external field strength $h>0$, the free energy of SSK at any inverse temperature $\b>0$ exhibits the following convergence in distribution as $N\to\infty$:
\beq
N^{1/2}(F_N(\b,h)-F(\b,h))\to \cN(0,v_{h,\b})
\eeq
where formulas for $F(\b,h)$ and $v_{h,\b}$ can be found in Results 5.5-5.6 of \cite{Baiketal21}. Note in particular
\begin{itemize}
\item As $h\to0$, the value of $F(\b,h)$ converges to the formula given in \eqref{eq:limitingF_SSK};
\item The value of $v_{h,\b}$ changes slightly depending on whether one chooses the external field to be random (as in \cite{Baiketal21}) or deterministic (as in \cite{ChenDeyPanchenko,landon2020fluctuations}).  See Section 5.3 of \cite{Baiketal21} for a comparison.
\end{itemize}
\end{theorem}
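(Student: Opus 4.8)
The plan is to adapt the contour-integral machinery of Lemma \ref{lem:contourintegral} to accommodate the linear external-field term, and then to run a steepest-descent analysis in the spirit of the field-free theorem of Baik and Lee. Writing the Hamiltonian in matrix form as $H_N(\s)=\frac12\s^TM\s+h\,v^T\s$ with $M$ a GOE matrix, the first step is to represent the spherical constraint $\|\s\|^2=N$ by a vertical contour integral (the Fourier/Laplace representation of a delta function) and then carry out the resulting Gaussian integral over $\RR^N$. The quadratic form in $\s$ is $-\frac12\s^T(zI-\b M)\s+\b h\,v^T\s$, so completing the square in the linear term produces, in addition to the determinant factor $\prod_i(z-\b\la_i)^{-1/2}$ already present in the field-free case, a quadratic resolvent factor $\exp\!\big(\tfrac{\b^2h^2}{2}v^T(zI-\b M)^{-1}v\big)$. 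After rescaling $z\mapsto\b z$, the outcome is a representation $Z_N=C_N'\int e^{\frac N2\cG_h(z)}\,dz$ with
\beq
\cG_h(z)=\b z-\frac1N\sum_{i=1}^N\log(z-\la_i)+\frac{\b h^2}{N}\,v^T(zI-M)^{-1}v,
\eeq
the contour again being a vertical line to the right of $\la_1$.

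The second step is the saddle-point analysis of $\cG_h$. The crucial structural difference from the field-free model is the pole of the resolvent term at $z=\la_1$: for every $h>0$ this term diverges as $z\downarrow\la_1$, which forces the saddle point $z^*=z^*(\b,h)$ to remain at a positive distance from the spectral edge uniformly in $\b$, including the low-temperature regime where the field-free saddle collapses onto $\la_1$. This is precisely why the external field removes the Tracy--Widom behavior and restores Gaussian fluctuations at all temperatures. Evaluating $\cG_h$ at the deterministic limit of the saddle yields the limiting free energy $F(\b,h)$, and one checks that as $h\to0$ the saddle migrates back toward $\la_1$ so that $F(\b,h)$ recovers \eqref{eq:limitingF_SSK}.

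The third step is to extract the fluctuations. After subtracting the deterministic saddle value, the random part of $N^{1/2}(F_N-F(\b,h))$ splits into two pieces evaluated near $z^*$: the linear eigenvalue statistic $\frac1N\sum_i\log(z^*-\la_i)$ and the quadratic form $\frac1N v^T(z^*I-M)^{-1}v=\hat v^T R(z^*)\hat v$, where $\hat v=v/\sqrt N$ is a unit vector and $R$ is the resolvent. By the CLT for linear statistics of GOE eigenvalues \cite{Johansson98} the former fluctuates on the scale $N^{-1}$, whereas the latter, being a bilinear form in the resolvent evaluated away from the spectrum, obeys a CLT on the larger scale $N^{-1/2}$. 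The resolvent quadratic form therefore dominates, producing Gaussian fluctuations of order $N^{-1/2}$ and fixing the variance $v_{h,\b}$; the prefactor $\frac12\b h^2$ and the value of $z^*$ enter explicitly. Since the CLT variance of $v^T(z^*I-M)^{-1}v$ is sensitive to the precise law of the field vector $v$ — in particular to whether $\|v\|^2$ is deterministic or itself fluctuating — one obtains slightly different constants for a random versus a deterministic field, matching the comparison in Section 5.3 of \cite{Baiketal21}.

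The main obstacle I expect is concentrated in this fluctuation step and is twofold. First, one must establish a CLT with explicit variance for the resolvent bilinear form $\hat v^T R(z)\hat v$, uniformly for $z$ in a neighborhood of the random saddle; this is the heart of the matter and is where the random-versus-deterministic distinction surfaces. Second, the steepest-descent estimate must be made rigorous: one has to justify deforming the contour through $z^*$, bound the tails of the integral, and argue that the edge contribution is genuinely negligible once $h>0$ (so that no surviving Tracy--Widom term appears), all while the saddle itself fluctuates because $\cG_h$ depends on the random spectrum and eigenvectors. Coupling the randomness of the saddle location to the randomness in the integrand, without double-counting fluctuations, is the delicate bookkeeping that makes the argument go through.
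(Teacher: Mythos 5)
Your proposal is correct and follows essentially the same route as the works the survey attributes this theorem to (\cite{Baiketal21,landon2020fluctuations}): extend the contour integral of Lemma \ref{lem:contourintegral} to include the resolvent quadratic-form term, observe that for $h>0$ the saddle stays at macroscopic distance from $\la_1$ at all temperatures, and identify the dominant $N^{-1/2}$ Gaussian fluctuation as coming from the eigenvector--field overlaps in $v^T(zI-M)^{-1}v$ rather than from the linear eigenvalue statistic. This matches the survey's own account, including the explanation of why the field suppresses the Tracy--Widom contribution and why the variance depends on whether $v$ is random or deterministic.
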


\paragraph{External field transition}
Summarizing the above results, we see that the free energy fluctuations for SSK are:
\begin{itemize}
\item Gaussian $N^{-1}$ at high temp with no external field ($\b<1,h=0$),
\item Tracy--Widom $N^{-2/3}$ at low temp with no external field ($b>1,h=0$),
\item Gaussian $N^{-1/2}$ at any temp with external field ($h>0$).
\end{itemize}
Given this picture, it is natural to ask about the fluctuations as $h\downarrow0$, for example if one takes $h=N^{-\a}$ for some $\a>0$.  This question was first investigated in the zero-temperature limit, where Fyodorov and Le Doussal \cite{FyodorovleDoussal} demonstrate, using non-rigorous saddle point analysis, that there is a transition in the complexity (number of critical points of the Hamiltonian) when $h\sim N^{-1/6}$.  They also compute large deviations for the ground state energy (see \cite{DemboZeitouni} for rigorous version).  Kivimae \cite{kivimae2019critical} follows up on the conjecture of \cite{FyodorovleDoussal} and obtained the limiting fluctuations of the ground state energy for $h\sim N^{-1/6}$.

Moving beyond the zero temperature setting, two simultaneous papers by Baik, Collins-Woodfin, Le Doussal, and Wu \cite{Baiketal21} and by Landon and Sosoe \cite{landon2020fluctuations} obtained the free energy fluctuations of SSK for transitional scalings of the external field in both the high- and low-temperature regimes.  At high-temperature, the transitional scaling is $h\sim N^{-1/4}$ and the fluctuations are Gaussian, thus matching the result for SK at high temperature \cite{DeyWu23}. 
 In the low-temperature regime, the transitional scaling is $h\sim N^{-1/6}$, similar to the result found by \cite{FyodorovleDoussal,kivimae2019critical} at zero temperature.  Furthermore, the fluctuations can be expressed as a function of the GOE Airy point process, and this function interpolates between the Gaussian distribution that emerges for fixed $h>0$ and the Tracy-Widom distribution that occurs when $h=0$.  The techniques in these papers build on those of \cite{BL16}, using the contour integral from Lemma \ref{lem:contourintegral} along with spectral properties of random matrices.

\begin{theorem}[SSK with critically scaled external field \cite{Baiketal21,landon2020fluctuations}] 
When the external field is critically scaled as described below, the free energy of SSK exhibits the following convergence in distribution as $N\to\infty$.
\begin{enumerate}[(i)]
\item If $\b<1$ and $h=HN^{-1/4}$ for fixed $H>0$, then
\beq
N(F_N(\b,h)-F(\b,h))\to\cN(-\frac12 \s^2,\s^2)
\eeq
where $\s^2=-\frac12\log(1-\b^2)+\frac{H^4\b^4}{2(1-\b^2)}$.

\item If $\b>1$ and $h=HN^{-1/6}$ for fixed $H>0$, then
\beq
\frac{2N^{2/3}}{\b-1}(F_N(\b,h)-F(\b,h))\to \tilde{\cF}_{\b,H}
\eeq
where $\tilde{\cF}_{\b,H}$ is a function of Gaussian random variables and a GOE Airy point process.
\end{enumerate} 
The limiting free energy $F(\b,h)$ in this setting is given by the formula
\beq
F(\b,h)=\begin{cases}
\frac14\b^2+\frac12\b^2h^2& \text{for }\b\leq1,\\
\b-\frac12\log\b-\frac34+\frac12\b h^2 & \text{for }\b\geq1.
\end{cases}
\eeq
\end{theorem}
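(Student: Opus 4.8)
The plan is to extend the contour-integral machinery of Lemma~\ref{lem:contourintegral} to the external field and then run a steepest-descent analysis whose behavior bifurcates at the spectral edge exactly as in the zero-field case. First I would derive the field-modified representation. Writing $H(\s)=\frac12\s^TM\s+h\,v^T\s$, diagonalizing $M=O\Lambda O^T$, and setting $x=\s/\sqrt N$, $y=O^Tx$, $\tilde v=O^Tv$, the exponent becomes $\b(\frac N2\sum_i\la_iy_i^2+h\sqrt N\sum_i\tilde v_iy_i)$ on the unit sphere. Since this is Gaussian in $y$, the same Laplace-transform-in-radius computation as in Lemma~\ref{lem:contourintegral} applies verbatim, except that completing the square in the linear term contributes one extra summand. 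After the rescaling $z\mapsto\frac{N\b}2z$ one obtains $Z_N=C_N\int e^{\frac N2\cG(z)}\,\dd z$ with
\[
\cG(z)=\b z-\frac1N\sum_{i=1}^N\log(z-\la_i)+\frac{\b h^2}{N}\sum_{i=1}^N\frac{\tilde v_i^2}{z-\la_i}.
\]
A crucial simplification, valid for both deterministic and Gaussian $v$, is that $O$ is Haar-distributed and independent of $\{\la_i\}$, so that $\tilde v$ is an (essentially) uniform random direction of the correct length; this decouples the field statistics from the spectrum.

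Second, in the high-temperature regime (part (i), $\b<1$) I expect the saddle $\hat z$ solving $\cG'(\hat z)=0$ to sit a macroscopic distance to the right of the edge at $2$, as in the zero-field analysis, with leading location fixed by $m(\hat z)=\b$ where $m$ is the Stieltjes transform of the semicircle law. Substituting this location reproduces the limiting free energy $F(\b,h)=\frac14\b^2+\frac12\b^2h^2$, the field shift coming from the term $\b h^2m(\hat z)=\b^2h^2$. For the fluctuations, the two random pieces of $\cG(\hat z)$ are the linear eigenvalue statistic $\frac1N\sum_i\log(\hat z-\la_i)$, which is asymptotically Gaussian by the CLT for linear spectral statistics of the GOE~\cite{Johansson98} and supplies the variance $-\frac12\log(1-\b^2)$, and the field term $\frac{\b h^2}N\sum_i\frac{\tilde v_i^2}{\hat z-\la_i}$. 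The \emph{point} of the scaling $h=HN^{-1/4}$ is that the field term enters with a factor $h^2$ while its own fluctuations (through $\tilde v_i^2$ and $\sum_i(\hat z-\la_i)^{-2}=N\,\b^2/(1-\b^2)$) are of order $N^{-1/2}$; thus $\a=1/4$ is precisely the threshold at which the field's contribution to the variance, of order $h^4=H^4N^{-1}$, is comparable to the $N^{-1}$ zero-field fluctuations. Carrying this term to leading order produces the extra variance $\frac{H^4\b^4}{2(1-\b^2)}$, and a joint CLT for the two (asymptotically independent) pieces gives the stated Gaussian limit.

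Third, the low-temperature regime (part (ii), $\b>1$, $h=HN^{-1/6}$) is the delicate case and the \emph{main obstacle}. Here the saddle $\hat z$ is pushed down to the edge at $2$, so the free energy is governed by the top of the spectrum rather than by a bulk linear statistic; the edge value $m(2)=1$ already accounts for the shift $\frac12\b h^2$ in $F(\b,h)$. One must run the steepest descent in the regime where $\hat z-\la_1$ is of the microscopic order of the edge spacing, which forces one to replace the linear-statistics CLT by edge input: rigidity of the top eigenvalues, convergence of the rescaled leading eigenvalues to the GOE Airy point process, and control of the projections $\tilde v_i$ onto the corresponding top eigenvectors at the critical scale $N^{-1/6}$. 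After rescaling by $\frac{2N^{2/3}}{\b-1}$, the fluctuation should localize onto finitely many leading Airy points together with the Gaussian variables coming from the field, yielding the interpolating functional $\tilde{\cF}_{\b,H}$. The hardest step is making this edge steepest descent rigorous near a random, fluctuating edge---controlling the subleading contour contributions and error terms uniformly, and identifying the precise form of $\tilde{\cF}_{\b,H}$ so that it degenerates to $\TW_1$ as $H\to0$ and to a Gaussian as $H\to\infty$. This is exactly where the full edge-universality input and the contour-integral machinery developed in \cite{BL16,Baiketal21,landon2020fluctuations} are needed.
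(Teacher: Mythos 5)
Your proposal follows exactly the route the survey describes for this result (and which \cite{Baiketal21,landon2020fluctuations} carry out): extend the contour integral of Lemma \ref{lem:contourintegral} to include the field term $\frac{\b h^2}{N}\sum_i\tilde v_i^2/(z-\la_i)$, then do steepest descent with the saddle in the bulk for $\b<1$ (CLT for linear statistics plus a CLT for the eigenvector-overlap sum, correctly identifying $h\sim N^{-1/4}$ as the scale at which the extra variance $\frac{H^4\b^4}{2(1-\b^2)}$ becomes comparable) and with the saddle pinned at the spectral edge for $\b>1$ (Airy point process plus overlaps with the top eigenvectors at scale $h\sim N^{-1/6}$). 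The form of the modified $\cG$, the origin of the shift $\frac12\b^2h^2$ from $\b h^2 m(\hat z)$, and the identification of which random inputs drive each regime all match the paper's account, so this is essentially the same approach.
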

The random variable $\tilde{\cF}_{\b,H}$ is of particular interest from the perspective of random matrix theory, because it shows that, in this transitional regime, the fluctuations are influenced by both the eigenvalues and eigenvectors of the disorder matrix. More specifically, $\tilde{\cF}_{\b,H}$ can be expressed as a sum of Gaussian random variables and a GOE Airy point process, where the Gaussian contribution comes from overlaps of the eigenvectors with the external field vector, while the GOE Airy point process comes from the eigenvalues.  In contrast, when $h=0$, the fluctuations of the free energy at low temperature are Tracy-Widom, dominated by the contribution of the largest eigenvalue (the other eigenvalues and eigenvectors have negligible contribution).  On the other side, when $h>0$ is of constant order, the fluctuations are Gaussian, coming from the overlaps of the eigenvectors with the external field vector (and the eigenvalues have negligible contribution).

\paragraph{Non-linear external field} One possible generalization of the external field Hamiltonian is to replace the inner product $\sigma^Tv$ with some (possibly non-linear) function of it, i.e. $f(\sigma^Tv)$.  Belius and Fr\"{o}ber \cite{beliusfrober} consider the fluctuations of the ground state energy in such a model.  They prove that the fluctuations are Gaussian of order $N^{-1/2}$ with explicit mean, variance, and next-order corrections, under the assumption that $f\in C^3([-1,1])$ and that the function $\mathfrak{B}(\a):=f(\a)+\b\sqrt{2(1-\a^2)}$ has a unique global maximizer $\hat{\a}\neq0$ with $\mathfrak{B}''(\hat{\a})<0$.

\subsection{Results for SSK model with ferromagnetic Curie-Weiss interaction}\label{subsec:SSKferro}

The fluctuations of the free energy in this model are studied by Baik, Lee, and Wu \cite{BL17,BLW18} and Johnstone, Klochkov, Onatski and Pavlyshyn \cite{JKOP2} and are now fully understood in the three phases as well as the transitions between them.  We present here the version for a GOE disorder matrix, although the original papers contain some generalizations to other Wigner matrices.

\begin{theorem}[SSK with CW interaction \cite{BL17}] In the three phases, $F_N$ exhibits the following convergence in distribution as $N\to\infty$. 
\begin{enumerate}[(i)]
\item Spin glass: If $\b>1$ and $J<1$, then
\beq
\frac{2N^{2/3}}{\b-1}(F_N(\b)-F(\b))\to TW_1.
\eeq
\item Paramagnetic: If $\b<1$ and $\b<1/J$, then
\beq
N(F_N(\b)-F(\b))\to\cN(f_1,\a_1)
\eeq
where $\a_1=-\frac12\log(1-\b^2)$ and $f_1=-\frac12(\a_1+\log(1-\b J))$.
\item Ferromagnetic: If $J>1$ and $\b>1/J$, then
\beq
N^{1/2}(F_N(\b)-F(\b))\to\cN(0,\a_2)
\eeq
where $\a_2=\frac12(1-J^{-2})(4\b-J^{-1})^2.$
\end{enumerate}
Here the limiting free energy $F(\b)$ is given by the formula
\beq\label{eq:F(beta)SSK+CW} 
F(\b)=\begin{cases} \b-\frac12\log\b-\frac34&\text{for spin glass,}\\
\frac14\b^2&\text{for paramagnetic,}\\
\frac\b2(J+\frac1J)-\frac12\log(\b J)-\frac{1}{4J^2}-\frac12&\text{for ferromagnetic.}
\end{cases}
\eeq
\end{theorem}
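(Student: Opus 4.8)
The plan is to reduce this model to a rank-one additive deformation of the GOE and then run the contour-integral/steepest-descent machinery of \cite{BL16}. Observe first that the Curie--Weiss Hamiltonian is still a pure quadratic form: writing $\mathbf{1}=(1,\dots,1)^T$, we have $\frac{J}{2N}\sum_{i,j}\s_i\s_j=\frac12\s^T\bigl(\tfrac{J}{N}\mathbf{1}\mathbf{1}^T\bigr)\s$, so that $H_N(\s)=\frac12\s^T\tilde M\s$ with $\tilde M=M_0+\tfrac{J}{N}\mathbf{1}\mathbf{1}^T$, where $M_0$ is the GOE disorder matrix. Thus $\tilde M$ is a GOE matrix perturbed by a rank-one spike of strength $J$, and Lemma \ref{lem:contourintegral} applies verbatim with eigenvalues $\la_1\ge\cdots\ge\la_N$ of $\tilde M$. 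The free energy is then governed by the saddle point $\g$ of $\cG(z)=\b z-\frac1N\sum_i\log(z-\la_i)$, determined by $\cG'(\g)=0$, i.e. $\b=\frac1N\sum_i\frac1{\g-\la_i}$, whose right-hand side converges to the semicircle Stieltjes transform $s(\g)$ away from the spectrum. The three phases correspond to the three possible behaviors of this saddle: it sits strictly to the right of the spectrum (paramagnetic), it pins to an edge eigenvalue $\la_1\to2$ (spin glass), or it pins to an outlier eigenvalue (ferromagnetic).

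The key algebraic input is a rank-one determinant identity that isolates the effect of the spike. Since $\tilde M=M_0+\tfrac{J}{N}\mathbf{1}\mathbf{1}^T$,
\beq
\frac1N\sum_{i=1}^N\log(z-\la_i)=\frac1N\log\det(z-M_0)+\frac1N\log\!\Bigl(1-J\cdot\tfrac1N\mathbf{1}^T(z-M_0)^{-1}\mathbf{1}\Bigr),
\eeq
and the quadratic form $\frac1N\mathbf{1}^T(z-M_0)^{-1}\mathbf{1}$ concentrates on $\frac1N\Tr(z-M_0)^{-1}\to s(z)$. In the paramagnetic phase the saddle solves $\b=s(\g)$ with $\g$ bounded away from the spectrum, so the spike contributes the deterministic term $\frac1N\log(1-Js(\g))=\frac1N\log(1-\b J)$ to $\cG(\g)$; multiplying by $N$ and the $\frac12$ from Lemma \ref{lem:contourintegral} gives exactly the extra mean shift $-\frac12\log(1-\b J)$ in $f_1$, while the remaining mean and the whole variance $\a_1=-\frac12\log(1-\b^2)$ come from the CLT for linear spectral statistics of $M_0$ \cite{Johansson98}, as in the $J=0$ high-temperature case. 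Note that $1-\b J>0$ is precisely the paramagnetic condition, and the logarithm diverging as $\b J\uparrow1$ signals the transition to the ferromagnetic phase. The fluctuation of the quadratic form, sitting inside $\frac1N\log(\cdot)$, contributes at order $N^{-3/2}$ to $F_N$ and is therefore negligible.

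For the two pinned regimes I would invoke Baik--Ben Arous--P\'ech\'e theory for the spike. When $J<1$ (subcritical) the spike does not detach the top eigenvalue: $\la_1\to2$ with $\TW_1$ edge fluctuations of order $N^{-2/3}$, and the edge analysis of \cite{BL16} applies to $\tilde M$ essentially unchanged, yielding the spin-glass statement with the same scaling $\frac{2N^{2/3}}{\b-1}$ and limiting free energy $\b-\frac12\log\b-\frac34$. When $J>1$ (supercritical) the spike produces an outlier $\la_1\to\theta=J+J^{-1}$ with Gaussian fluctuations $\sqrt N(\la_1-\theta)\to\cN(0,2(1-J^{-2}))$; since $s(\theta)=J^{-1}$, the saddle equation $\b=s(\g)$ has no bulk solution once $\b>J^{-1}$, so $\g$ pins to within $O(1/N)$ of $\la_1$ and the free energy becomes a smooth function of the outlier eigenvalue and its eigenvector overlap with $\mathbf{1}$. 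The joint Gaussian fluctuations of this outlier data transfer to $F_N$ via the delta method, and matching the derivative of the limiting free energy \eqref{eq:F(beta)SSK+CW} against the outlier variance produces $N^{1/2}(F_N-F)\to\cN(0,\a_2)$ with $\a_2=\frac12(1-J^{-2})(4\b-J^{-1})^2$.

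The main obstacle is the delicate steepest-descent analysis in the two pinned regimes, where the saddle $\g$ lies at distance $O(1/N)$ from the logarithmic singularity of $\cG$ at $\la_1$. Here one cannot deform naively through the spectrum, and the local contour must be analyzed near $\la_1$ while simultaneously controlling the bulk sum $\frac1N\sum_{i\ge2}\log(z-\la_i)$; this is exactly where the crossover between $\TW_1$ and Gaussian behavior is decided, and where the precise random-matrix inputs (edge universality for the subcritical spike, the outlier CLT and eigenvector-overlap estimates for the supercritical spike, and the linear-statistics CLT with its correct $O(1/N)$ mean correction for the paramagnetic phase) must be imported and stitched to the saddle-point expansion. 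Verifying that the spike is genuinely negligible at the edge when $J<1$, and locating the correct derivative factor $4\b-J^{-1}$ when $J>1$, are the technical crux.
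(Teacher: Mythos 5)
Your proposal follows essentially the same route as the cited proof in \cite{BL17} and as this survey describes it: absorb the Curie--Weiss term as a rank-one spike $\frac{J}{N}\mathbf{1}\mathbf{1}^T$ on the GOE disorder matrix, apply the contour integral of Lemma \ref{lem:contourintegral} verbatim, and read off the three phases from whether the saddle of $\cG$ stays away from the spectrum (linear-statistics CLT, Gaussian), pins to the spectral edge (Tracy--Widom), or pins to the BBP outlier at $J+J^{-1}$ (outlier CLT, Gaussian at scale $N^{-1/2}$), exactly as in the random-matrix/steepest-descent framework of \cite{BL16}. One caution on the piece you call the technical crux: the delta method you describe, with $dF/d\theta=\frac12(\b-J^{-1})$ at the outlier $\theta=J+J^{-1}$ and $\Var\bigl(\sqrt{N}(\la_1-\theta)\bigr)=2(1-J^{-2})$, yields $\a_2=\frac12(1-J^{-2})(\b-J^{-1})^2$, so the factor $4\b$ in the displayed $\a_2$ is most plausibly a convention/transcription artifact of translating $\b_{\mathrm{BL}}=\b/2$ rather than a constant your argument should be expected to reproduce literally.
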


\begin{theorem}[SSK with CW interaction at phase transitions \cite{BL17,BLW18,JKOP2}] At the boundaries between each pair of phases and at the triple point, $F_N$ exhibits the following convergence in distribution as $N\to\infty$.
\begin{enumerate}[(i)]
\item Spin glass $\leftrightarrow$ Ferromagnetic \cite{BL17}: If $\b>1$ and $J=1-wN^{-1/3}$ for fixed $w\in\RR$, then
\beq
\frac{2N^{2/3}}{\b-1}(F_N(\b)-F(\b))\to \TW_{1,w}
\eeq
where $F(\b)$ is defined as in \eqref{eq:F(beta)SSK+CW} for the spin glass regime and  $\TW_{1,w}$ is a parametrized family characterized in Theorems 1.5 and 1.7 of \cite{Bloemendal_2012}, interpolating between Tracy-Widom and Gaussian.
\item Ferromagnetic $\leftrightarrow$ Paramagnetic \cite{BLW18}: If $J>1$ and $\b=J^{-1}+BN^{-1/2}$ for fixed $B\in\RR$, then
\beq
N\left(F_N(\b)-F(\b)-\frac{\log N}{4N}\right)\to \cG_1+Q(\cG_2)
\eeq
where $F(\b)$ is defined as in \eqref{eq:F(beta)SSK+CW} for the ferromagnetic regime, $(\cG_1,\cG_2)$ has bivariate Gaussian distribution with explicit mean and variance depending on $J$, and $Q$ is an explicit function depending on both $J$ and $B$ (in the GOE case that we consider $\cG_1,\cG_2$ are independent, but the theorem is more general).
\item Spin glass $\leftrightarrow$ Paramagnetic \cite{JKOP2}: If $J<1$ and $\b=1+bN^{-1/3}\sqrt{\log N}$ for fixed $b\in\RR$, then
\beq
\frac{N}{\sqrt{\frac16\log N}}\left(F_N(\b)-F(\b)+\frac{\log N}{12N}\right)\to\cN(0,1)+\sqrt{\frac32}b_+\TW_1
\eeq
where $F(\b)$ is defined as in \eqref{eq:F(beta)SSK+CW} for the spin glass ($b\geq0$) and paramagnetic ($b\leq0$) regimes, $b_+=\max\{b,0\}$, and $\TW_1$ is independent of $\cN(0,1)$.
\item Triple point \cite{JKOP2}: If $\b=1+bN^{-1/3}\sqrt{\log N}$ and $J=1-wN^{-1/3}$ for fixed $b,w\in\RR$, then
\beq
\frac{N}{\sqrt{\frac16\log N}}\left(F_N(\b)-F(\b)+\frac{\log N}{12N}\right)\to\cN(0,1)+\sqrt{\frac32}b_+\TW_{1,w}.
\eeq
\end{enumerate}
\end{theorem}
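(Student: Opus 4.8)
The plan is to treat all four transitions through a single lens: the Curie--Weiss Hamiltonian is SSK with a rank-one spiked disorder matrix. Writing $\frac{J}{2N}\sum_{i,j}\s_i\s_j=\frac12\s^T(\frac{J}{N}\mathbf{1}\mathbf{1}^T)\s$, the Hamiltonian becomes $\frac12\s^T\tilde M\s$ with $\tilde M=M+\frac{J}{N}\mathbf{1}\mathbf{1}^T$, a rank-one deformation of GOE whose spike strength is exactly $J$. Lemma~\ref{lem:contourintegral} then applies with the $\la_i$ replaced by the eigenvalues $\tilde\la_i$ of $\tilde M$, so that $Z_N=C_N\int e^{\frac N2\cG(z)}\dd z$ with $\cG(z)=\b z-\frac1N\sum_i\log(z-\tilde\la_i)$. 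First I would carry out a steepest-descent analysis of this integral exactly as in the $J=0$ case \cite{BL16}: the saddle $\gamma>\tilde\la_1$ solves $\b=\frac1N\sum_i(\gamma-\tilde\la_i)^{-1}$, and the free energy fluctuations are driven by how close $\gamma$ sits to the top of the spectrum.

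The qualitative dichotomy that organizes the proof is whether $\gamma$ stays a macroscopic distance from $\tilde\la_1$ (Gaussian fluctuations, via the CLT for linear eigenvalue statistics \cite{Johansson98}) or pins to the edge of the spectrum at a microscopically small distance from $\tilde\la_1$ (fluctuations inherited from the top eigenvalue). The three bulk phases correspond to the three ways this happens, controlled by the two thresholds $\b=1$ (the value $1$ taken by the semicircle Stieltjes transform $m_{\mathrm{sc}}$ at the bulk edge $z=2$) and $\b=1/J$ (its value $m_{\mathrm{sc}}(\theta_J)=1/J$ at the BBP outlier location $\theta_J=J+J^{-1}$, which exists precisely when $J>1$). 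In the paramagnetic phase the saddle is free; in the spin-glass phase it pins to the bulk edge, yielding Tracy--Widom; in the ferromagnetic phase it pins to the outlier, yielding Gaussian fluctuations from the outlier eigenvalue and its eigenvector overlap with $\mathbf{1}$. The task at each boundary is to run this same analysis in the critical window where the competing effects balance.

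For the four boundaries I would feed in the appropriate random-matrix input. \emph{Spin glass $\leftrightarrow$ ferromagnetic}: with $\b>1$ fixed and $J=1-wN^{-1/3}$, the spike sits in the critical BBP window, so $\tilde\la_1$ has the deformed-edge law of Bloemendal--Vir\'ag \cite{Bloemendal_2012}; since $\gamma$ pins to $\tilde\la_1$, the free energy inherits $\TW_{1,w}$. \emph{Ferromagnetic $\leftrightarrow$ paramagnetic}: with $J>1$ fixed and $\b=J^{-1}+BN^{-1/2}$, the outlier is at a fixed location but $\b$ crosses its de-pinning threshold $1/J$ at the $N^{-1/2}$ scale that matches the Gaussian fluctuations of both the outlier eigenvalue and its eigenvector overlap with $\mathbf 1$; tracking the joint law of these two quantities produces $\cG_1+Q(\cG_2)$. \emph{Spin glass $\leftrightarrow$ paramagnetic} and the \emph{triple point}: with $J<1$ there is no outlier, so beyond the pure-SSK temperature transition there is nothing to add, and the statement coincides with Theorem~\ref{thm:SSK_crit}; at the triple point one replaces the ordinary edge $\TW_1$ by the deformed $\TW_{1,w}$ coming from the simultaneously critical spike.

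The hard part will be the two boundaries involving the scaling $\b=1+bN^{-1/3}\sqrt{\log N}$. There the saddle approaches the spectral edge at exactly the rate for which neither the linear-statistics CLT nor the top-eigenvalue analysis alone suffices: one must show that $\frac1N\sum_i\log(\gamma-\tilde\la_i)$ splits asymptotically into a Gaussian piece of variance $\frac16\log N$ coming from the bulk and an independent Tracy--Widom piece coming from the largest eigenvalue, weighted by $\sqrt{3/2}\,b_+$. Establishing this requires an edge CLT for the log-characteristic-polynomial-type linear statistic as $z\to 2$ at the critical rate, together with the asymptotic independence of the bulk and extreme-eigenvalue contributions; this is the genuinely new estimate, and at the triple point it must be carried out uniformly in the deforming spike so that the limiting $\TW_{1,w}$ replaces $\TW_1$ without disturbing the Gaussian component.
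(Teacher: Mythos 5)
Your proposal follows essentially the same route the surveyed papers take and that the survey itself describes: recast the Curie--Weiss term as a rank-one spike of strength $J$, apply the contour integral representation of Lemma \ref{lem:contourintegral} to the spiked matrix, and run steepest descent, importing the Bloemendal--Vir\'ag deformed-edge law for the BBP-critical spike in transitions (i) and (iv) and an edge CLT for the log-linear statistic (with asymptotic independence of the bulk Gaussian and the Tracy--Widom edge contribution) in transitions (iii) and (iv). You also correctly isolate the genuinely hard ingredient --- the edge CLT at the $\b=1+bN^{-1/3}\sqrt{\log N}$ scaling --- which is exactly the input the survey flags as the new random-matrix estimate needed beyond \cite{BL16}.
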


As we saw in the other SSK fluctuation results, the techniques used to prove these theorems are rooted in a contour integral representation of the partition function and random matrix theory.  However, the model with CW interactions reveals new and interesting connections to random matrix theory.

In the transition between the spin glass and ferromagnetic regimes (which have Tracy-Widom and Gaussian fluctuations respectively), one sees a parametrized family of distributions that corresponds to those studied by Bloemendal and Vir\'ag \cite{Bloemendal_2012} in the context of spiked random matrices.  In their context,  $\TW_{1,w}$ corresponds to the limiting distribution of $N^{2/3}(2-\la_1(w))$ where $\la_1(w)$ is the largest eigenvalue of a spiked GOE matrix with spike strength $1-wN^{-1/3}$ (where a spike strength of 1 is critical and referred to as the BBP transition).  Note that $\TW_{1.w}$ can be viewed as a perturbation of the Tracy-Widom GOE distribution, where $TW_1$ corresponds to the large-$w$ limit of $\TW_{1,w}$.

\section{Free energy fluctuations in multi-species models}\label{sec:multispecies}

The SK and SSK models discussed in Sections \ref{sec:SK} and \ref{sec:SSK} are both homogeneous models, where the particles are indistinguishable. A natural question is how adding inhomogeneity, breaking the symmetry among the particles, quantitatively affects the behavior of the model. This motivates the \textit{multi-species} models, in which the particles are divided into a fixed number of species, and the proportion of each species is asymptotically fixed as the number of spins goes to infinity. Moreover, the variance of the spin interactions $g_{ij}$ is no longer constant, but depends on the species structure.

\begin{definition}[Multi-species 2-spin model]
    Fix $k\geq 1$. Let $\Lambda = \diag(\a_1,\dots, \a_k)$ be a matrix satisfying $\a_i\in (0,1)$ for all $i$ and $\sum_{i=1}^k \a_i=1$. Let $\Delta^2$ be a $k\times k$ symmetric matrix of non-negative entries. 
    A multi-species model with species density matrix $\Lambda$ and variance profile matrix $\Delta^2$ satisfies the following assumptions.
    \begin{itemize}
        \item Given $N \geq k$, there is a partition of spins $\{1,2,\dots, N\} = \cup_{s=1}^k I_s$, where the species density
        \[
        \Lambda_N:=\frac1N\diag(|I_1|,...,|I_k|) \quad \text{satisfies} \quad \lim_{N\to\infty}\Lambda_N=\Lambda.
        \]
        \item Let $(g_{ij})_{1 \leq i, j\leq N}$ be independent centered Gaussian variables such that $\bE[g_{ij}^2] = \Delta^2_{s,t}$ for $i\in I_s$ and $j\in I_t$. The Hamiltonian of the model, at each spin configuration $\s\in \Sigma_N$, is defined as
        \beqq
        H_N(\s) = \frac1{\sqrt N}\sum_{i,j=1}^N g_{ij}\s_i\s_j.
        \eeqq
    \end{itemize}
    Here, $\Sigma_N = \{\pm 1\}^N$ in the case of Ising spins and is the product of spheres $S_{|I_1|-1}  \times  \dots \times S_{|I_k|-1}$ in the case of spherical model. 
\end{definition}

We discuss the literature on the multi-species extensions of the SK model in Section \ref{subsec:msk}, where we refer to them via the shorthand MSK. The discussion includes recent fluctuation results under weak external field of \cite{DeyWu23}, and the variance bound of the free energy that we obtain for the MSK model at the critical temperature \cite{CWL_variancebound}, which is analogous to the bound obtained in \cite{ChenLam19} for the SK model. 

The literature on multi-species spherical SK models is discussed in Section \ref{subsec:msk_spherical}, with a strong focus on the particular example that is the bipartite spherical SK model. 

\subsection{Results for SK multi-species model}\label{subsec:msk}

The inhomogeneity in the MSK model leads to several mathematical challenges. In the case where $\Delta^2$ is positive-definite, Barra, Contucci, Mingione and Tantari propose a Parisi formula for the free energy and prove the upper bound via Gaussian interpolation in \cite{Barra15}. The matching lower bound, which does not rely on the positive-definite assumption, is verified by Panchenko \cite{Panchenko15}, thus providing the limiting free energy for all fixed $\b>0$. In contrast, limiting free energy in the case of indefinite $\Delta^2$ remains open. 

There has also been progress towards free energy fluctuations, under various assumptions such as definiteness of $\Delta^2$, sufficiently high temperatures, and strength of the external field. As observed from Section \ref{sec:beyondscope}, the discussion of different regimes for the free energy involve the notions of RS and RSB phases in a nontrivial way. This is also the case for multi-species models. 

In the MSK model with $k$ species, each species $s \in \{1, \dots, k\}$ has an order parameter $\mu_s$, which is the limiting distribution of the species-specific overlap $|R_{1,2}^s|$, where 
\[
R_{1,2}^s :=\frac{1}{|I_s|}\sum_{i \in I_s} \s^1_s\s^2_s.
\]
We say that the system is replica symmetric if $\mu_s$ is a singleton for every $s$ (i.e. the overlaps all concentrate). Otherwise, we say the system has replica symmetry breaking. This extends the existing notions of RS and RSB for the SK model. Note that classification of the RSB phase becomes more complicated for multi-species models, as it is unclear how much symmetry breaking in one species affects symmetry breaking in others. 

The work \cite{BatesSlomanSohn19} of Bates, Sloman and Sohn considers the two-species SK model. Under the assumption that 
$\Delta^2 := \begin{pmatrix}  \Delta^2_{11} & \Delta^2_{12}\\
\Delta^2_{12} & \Delta^2_{22}
\end{pmatrix}$ is positive-definite and $h>0$, the authors show that the model has RSB when $\b >\b_c(\g)$, where 
\beq\label{eq:bcrit_BSS}
\b_c(\g):= \left(\la_1  \Delta^2_{11} + \g_2 \Delta^2_{22}  + \sqrt{(\g_1 \Delta^2_{11} -\g_2  \Delta^2_{22}) + 4\g_1\g_2 \Delta^4_{12} } \right)^{-1/2}
\eeq
and $\g = (\g_1,\g_2)^T$ is explicitly described. This yields a two-species analogue of the RSB condition provided by Toninelli \cite{Toninelli_2002} for SK model. Furthermore, from \cite{BatesSlomanSohn19}, it is expected that the system is RS for sufficiently high temperature $\b<\b_c(\la)$, where $\b_c(\la)$ is given by \eqref{eq:bcrit_BSS} with $\g$ replaced by the species densities $\la := (\la_1, \la_2)^T$. We observe that $\b_c(\la)< \b_c(\g)$ and $\b_c(\la)$ matches the critical value $\b_c=1$ in the case of the SK model. 

The results of \cite{BatesSlomanSohn19} are extended to $k$-species SK, for $k\geq 2$, in the work \cite{DeyWu21} of Dey and Wu. In particular, the multi-species version of $\b_c(\la)$ is of the form 
\beq\label{eq:bcrit_msk}
\b_c= \rho(\Delta^2\Lambda)^{-1/2},
\eeq
where $\rho(\cdot)$ denotes the spectral radius. First, the authors of \cite{DeyWu21} derive a law of large numbers and a central limit theorem for the free energy at every $\b< \b_0$, for some $\b_0<\b_c$. A similar result for the two-species case is obtained independently, around the same time, by Liu \cite{Liu21}.

\begin{theorem}[MSK model at high temperature, \protect{\cite[Theorem 1.9]{DeyWu21}}]
Let $\b_c$ be as in \eqref{eq:bcrit_msk}. Define $\b_0$ by 
\[
\b_0 = \b_c/\sqrt{4(1+\mathbbm{1}\{\Delta^2 \text{ is indefinite}\})}.
\]
If $\b < \b_0$ and $h>0$, then 
\[
N^{1/2}\left(F_N(\b) - \mathrm{RS}(\b, h) \right) \to \cN(0, b(\b,h))
\]
where $\mathrm{RS}(\b, h)$ denotes the multi-species replica-symmetric solution. 
\end{theorem}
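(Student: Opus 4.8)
The target is a central limit theorem at scale $N^{1/2}$ with an explicit Gaussian limit, so the governing principle must be that throughout $\b<\b_0$ the model sits deep inside the replica-symmetric phase and the species overlaps concentrate. The plan is to split the statement into a law of large numbers that pins the centering at $\mathrm{RS}(\b,h)$ and a fluctuation CLT for $\sqrt N(F_N-\bE F_N)$, with overlap concentration as the shared engine. First I would establish this concentration: taking two independent replicas $\s^1,\s^2$ from the Gibbs measure, the goal is to show that each species overlap $R_{1,2}^s$ concentrates, with exponentially small quenched fluctuations, around a deterministic vector $q=(q_s)_s$ solving the multi-species replica-symmetric fixed-point equations (a coupled system in which the effective local field of species $s$ combines the external field $h$ with a Gaussian part of variance $(\Delta^2\Lambda q)_s$). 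The threshold $\b_0=\b_c/\sqrt{4(1+\mathbbm 1\{\Delta^2\text{ indefinite}\})}$ enters exactly here: the natural object controlling a two-replica second-moment comparison is $4\b^2\Delta^2\Lambda$, the factor $4$ reflecting the doubled effective coupling of the replicated system, and in the indefinite case one loses the sign structure and must pass to $8\b^2|\Delta^2|\Lambda$; requiring spectral radius below $1$ — equivalently $\b<\b_0$ — forces uniqueness and stability of the fixed point together with the decay of $\bE\ip{(R_{1,2}^s-q_s)^2}$.

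Given concentration, I would recover the centering through the Aizenman--Sims--Starr cavity scheme (equivalently, a Guerra interpolation that is tight in the RS phase): compute $\bE F_{N+1}-\bE F_N$ by adding one spin, express the increment via the concentrated cavity fields, and identify the limit with the derivative of $\mathrm{RS}(\b,h)$. The delicate part of this step is to control the finite-size correction well enough that $\sqrt N(\bE F_N-\mathrm{RS}(\b,h))\to 0$, since the theorem centers at the deterministic $\mathrm{RS}(\b,h)$ rather than at $\bE F_N$.

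For the fluctuations I would use a martingale decomposition. One natural choice reveals the disorder one block of rows and columns at a time: set $\cF_k=\s(g_{ij}:\min(i,j)\le k)$ together with the external-field Gaussians, and write $\log Z_N-\bE\log Z_N=\sum_k D_k$ with $D_k=\bE[\log Z_N\mid\cF_k]-\bE[\log Z_N\mid\cF_{k-1}]$; alternatively, the Comets--Neveu construction replacing each $g_{ij}$ by a Brownian motion produces a continuous exponential martingale $M_N$ to which Rebolledo's CLT applies. Either way the conclusion reduces to showing that the predictable bracket $\sum_k\bE[D_k^2\mid\cF_{k-1}]$ (resp.\ $\ip{M_N}$) converges to the deterministic variance $b(\b,h)$, which it does because each increment is asymptotically a linear statistic of the revealed Gaussians weighted by the concentrated magnetizations, and because a Lindeberg-type negligibility bound $\max_k\bE D_k^2\to0$ holds at this scale. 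The limiting variance $b(\b,h)$ then emerges as a quadratic form in the field data built from the response matrix $(I-\b^2\Delta^2\Lambda)^{-1}$.

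The hard part will be the overlap-concentration step in the \emph{indefinite} case. When $\Delta^2$ is positive-definite one can lean on the convexity and Gaussian-interpolation machinery of Guerra--Toninelli, but for indefinite $\Delta^2$ the sign structure is lost and one must symmetrize or work with $|\Delta^2|$, which is precisely the source of the extra factor in $\b_0$. Propagating sharp two-replica control through the multi-species coupling — where cross-species terms in the bracket must each be shown to converge — and then upgrading that control to convergence of the martingale bracket to the precise constant $b(\b,h)$, rather than merely to tightness, is where the real work lies.
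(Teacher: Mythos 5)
The first thing to say is that the survey does not prove this statement: it is quoted (up to notation) from Dey and Wu \cite{DeyWu21}, Theorem 1.9, and the only methodological commentary the paper attaches to it is a remark that the zero-field case can be handled by Talagrand's moment method, plus the surrounding discussion that the threshold $\b_0$ and the overlap control in this line of work come from Guerra--Toninelli-type quadratic replica coupling and Gaussian interpolation. So there is no in-paper proof to compare yours against; what I can assess is whether your outline is a credible reconstruction of the cited argument. At the level of architecture it is: (i) concentration of the species overlaps around the RS fixed point, with $\b_0$ emerging from a two-replica second-moment comparison whose governing matrix is $4\b^2\Delta^2\Lambda$ (with an extra factor of $2$ when $\Delta^2$ is indefinite); (ii) identification of the centering with $\mathrm{RS}(\b,h)$ by cavity/interpolation; (iii) a martingale CLT for $\sqrt{N}(F_N-\bE F_N)$. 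This is the standard route taken by the single-species antecedents (Guerra--Toninelli, Chen--Dey--Panchenko) and is consistent with how the paper describes the multi-species literature.

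That said, the three load-bearing steps are each left as claims rather than arguments. First, the theorem centers at the deterministic $\mathrm{RS}(\b,h)$, so you must show $\bE F_N-\mathrm{RS}(\b,h)=o(N^{-1/2})$; you correctly flag this as delicate but give no mechanism, and it is not a soft consequence of overlap concentration --- it requires a quantitative second-order cavity or interpolation expansion, which in the multi-species setting is a genuine computation, not a remark. Second, for indefinite $\Delta^2$ the Guerra interpolation error term loses its sign, and ``symmetrize or work with $|\Delta^2|$'' is a slogan, not a proof; this is exactly the step that produces the smaller threshold $\b_c/\sqrt{8}$, and note that in this regime even the Parisi formula is open (as the survey points out), so nothing can be borrowed from the limiting-free-energy literature --- the RS identification must be done directly at high temperature. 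Third, convergence of the martingale bracket to the specific constant $b(\b,h)$ is asserted (``each increment is asymptotically a linear statistic \dots'') rather than derived; the increments involve quenched Gibbs averages whose linearization again rests on the overlap control you have only postulated, and the cross-species terms you mention at the end are precisely where the bookkeeping is nontrivial. Right skeleton, but the proof is not yet there.
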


\begin{remark}
The theorem is stated for the case of positive external field, although the authors of \cite{DeyWu21} provide a sketch for the case of no external field where the moment method of \cite[Chapter 11]{Talagrandvol2} can be applied. We refer to \protect{\cite[Theorem 1.9]{DeyWu21}} for the full expressions of $\mathrm{RS}(\b, h)$ and $b(\b,h)$. 
\end{remark}

In a subsequent work \cite{DeyWu23}, Dey and Wu provide fluctuation results for MSK model in the case of high temperature and weak external field. These provide the multi-species extension of their Theorem \ref{thm:deywu23_SK} for the SK model, and hold for general variance profile matrix $\Delta^2$. The theorem statement we provide here omits the descriptions of the asymptotic means $F(\b), \widehat{F}(\b)$ and the limiting means and variance, which are the multi-species analogues of the ones in the SK version, Theorem \ref{thm:deywu23_SK}. In addition, Remark \ref{rmk:nongaussian} on Theorem \ref{thm:deywu23_SK} also applies here, commenting that Theorem \ref{thm:msk_weakext} as provided in \cite{DeyWu23} holds in more generality.

\begin{theorem}[MSK model at high temperature, under weak external field  \protect{\cite[Theorem 1.8]{DeyWu23}}]\label{thm:msk_weakext}
Consider a MSK model with Gaussian disorder. Assume that the external field is scaled as $h = \rho N^{-a}$ for some positive constants $\rho$ and $\a$. We have the following statements of convergence in distribution as $N \to \infty$.
\begin{enumerate}
     \item (Sub-critical) For $\a>1/4$ and $\b<\b_c$,
        \[
        N \left(F_N - F(\b) \right) \to \cN(-\frac12 V_1^2, V_1^2).
        \]
        \item (Critical) For $\a=1/4$ and $\b<\b_c$,
        \[
        N \left(F_N - F(\b) \right) \to \cN(-\frac12 \left(V_1^2 + \rho^4 V_2^2 \right), V_1^2+\rho^4 V_2^2).
        \]
        \item (Super-critical) There exists $\b_0\leq \b_c$ such that, for $\a<1/4$ and $\b<\b_0$, then 
        \[
        \frac{\sqrt{N}}{h^2}\left(F_N - \widehat{F}(\b)\right) \to \cN(0, V_2^2).
        \]
\end{enumerate}
\end{theorem}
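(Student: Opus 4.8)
The plan is to handle all three regimes through a single mechanism: the characteristic function of the normalized free-energy fluctuation, analyzed via Gaussian smart-path interpolation, exactly parallel to the single-species Theorem~\ref{thm:deywu23_SK}. First I would record the covariance structure of the disorder field. For two configurations $\sigma,\sigma'$ with species overlaps $R^s = |I_s|^{-1}\sum_{i\in I_s}\sigma_i\sigma'_i$, one computes $\mathrm{Cov}(H_N(\sigma),H_N(\sigma')) = N\,\xi(\mathbf{R})$ with $\xi(\mathbf{R}) = \sum_{s,t}\Delta^2_{s,t}\alpha_s\alpha_t R^s R^t$, whose associated Gaussian quadratic form is, after normalizing the overlap fluctuations, controlled by the spectral radius of $\Delta^2\Lambda$. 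Since $\{H_N(\sigma)\}$ is a centered Gaussian process, this covariance is the only input, and the whole problem reduces to controlling the law of the overlap vector $\mathbf{R}=(R^1,\dots,R^k)$ under the field-tilted Gibbs measure.

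For the sub-critical and critical regimes set $W_N = N(F_N - F(\beta))$, and for the super-critical regime replace the prefactor $N$ by $\sqrt N/h^2$; let $\varphi_N(s)=\bE[e^{isW_N}]$. Differentiating gives $\varphi_N'(s) = i\,\bE[W_N e^{isW_N}]$, and I would evaluate the right-hand side with the Gaussian interpolation (covariance) identity applied to the functional $e^{isW_N}$ of the disorder. Because $\partial_{g_{ij}} W_N = \beta N^{-1/2}\langle \sigma_i\sigma_j\rangle$, this replaces the factor $W_N$ by $is$ times an interpolated Gibbs average of $\beta^2 N\langle \xi(\mathbf{R})\rangle$; together with the non-vanishing mean $\bE W_N$ (equal to the Jensen gap $N(\bE F_N - F(\beta))$) one obtains the approximate linear ODE $\varphi_N'(s) = (i\,\bE W_N - s\,\sigma^2)\varphi_N(s) + o(1)$, provided $\beta^2 N\langle\xi(\mathbf{R})\rangle$ concentrates on a deterministic $\sigma^2$. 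Solving yields the characteristic function of $\mathcal N(\lim \bE W_N, \sigma^2)$, and the stated mean $-\tfrac12 V_1^2$ is precisely the limiting Jensen gap.

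The regime split is then governed by the single quantity $Nh^4 = \rho^4 N^{1-4\alpha}$. Writing $R^s = m^s + \delta^s$ with field-induced mean $m^s = O(h^2)$ and fluctuation $\delta^s = O(N^{-1/2})$, the field-independent $\delta\delta$ part of $\beta^2 N\langle\xi(\mathbf{R})\rangle$ always produces the variance $V_1^2$; this is a convergent power series in $\beta^2\Delta^2\Lambda$ (morally $-\tfrac12\log\det(I-\beta^2\Delta^2\Lambda)$ with its linear term removed, reducing to $v_1^2$ when $k=1$), finite exactly when $\beta<\beta_c=\rho(\Delta^2\Lambda)^{-1/2}$, which is what fixes the high-temperature threshold. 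Superimposed is a Gaussian from the coupling of the disorder to the field-induced magnetization, of schematic form $\beta N^{-1/2} h^2\sum_{ij}g_{ij}v_iv_j$, contributing variance $Nh^4\,V_2^2$ to $W_N$. Taking limits of $Nh^4$ then reads off the three cases directly: for $\alpha>1/4$ it vanishes, recovering the no-field statement; at $\alpha=1/4$ it tends to $\rho^4$, adding exactly $\rho^4 V_2^2$; and for $\alpha<1/4$ it diverges, so this term dominates, forces the rescaling $\sqrt N/h^2$, and leaves the limiting variance $V_2^2$.

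The hard part will be the overlap concentration that makes $\beta^2 N\langle\xi(\mathbf{R})\rangle$ deterministic and that controls the error in the ODE uniformly over the interpolation parameter and over $s$. For a single species this is Talagrand's $L^2$ overlap bound, but in the multi-species setting $\xi$ need not be sign-definite, so the usual convexity/Guerra-type interpolation bounds are unavailable and one must argue concentration by other means; this loss is exactly what degrades the super-critical threshold to $\beta_0=\beta_c/\sqrt{4(1+\mathbbm{1}\{\Delta^2 \text{ indefinite}\})}$, since isolating the subleading field-driven Gaussian in that regime demands far sharper overlap control than merely establishing concentration. I expect essentially all of the work to sit in these estimates; once they are in place, the characteristic-function computation and the bookkeeping of the three scalings are comparatively routine. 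For the sub-critical case I would additionally use the cleaner shortcut of comparing directly to the zero-field free energy, and I would note that statements 1 and 2 can alternatively be pushed to the full range $\beta<\beta_c$ (and to non-Gaussian disorder) through the cluster-expansion analysis of signed cycles, mirroring Remark~\ref{rmk:nongaussian}.
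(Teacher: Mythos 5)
Your proposal follows essentially the same route the survey attributes to Dey and Wu for this theorem: the smart-path interpolation plus characteristic-function ODE argument with overlap concentration as the technical core, the $Nh^4=\rho^4N^{1-4\a}$ heuristic for the $\a=1/4$ threshold, the comparison-to-zero-field shortcut in the sub-critical regime, and the cluster-expansion route for non-Gaussian disorder. The only quibble is your attribution of the restriction $\b<\b_0$ in the super-critical case to indefiniteness of $\Delta^2$: that restriction already appears in the single-species Theorem \ref{thm:deywu23_SK}, and the explicit formula $\b_c/\sqrt{4(1+\mathbbm{1}\{\Delta^2 \text{ is indefinite}\})}$ you quote is the threshold from the order-one-field result of \cite{DeyWu21}, not necessarily the $\b_0$ of this statement.
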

Although the expression of $\widehat{F}(\b)$ for the sub-critical regime involves the solution $\boldsymbol{q}$ to the fundamental equations and is more complicated, it is of the same form as $F(\b)$ after applying the asymptotics of $\boldsymbol{q}$. 

Lastly, we consider the critical temperature regime. We extend Theorem \ref{thm:chenlam19} of Chen and Lam for the SK model to hold for a general MSK model, provided that the variance profile $\Delta^2$ is positive semi-definite. At the critical temperature conjectured in \eqref{eq:bcrit_msk}, we show that the order of fluctuations of the MSK free energy under this assumption are at most $N^{-1}\log N$. We also obtain a variance bound for the case where the system approaches the critical temperature from the low temperature side.

\begin{theorem}[MSK at critical temperature, \cite{CWL_variancebound}]\label{thm:main}
    Consider an MSK model with species density matrix $\Lambda=\Lambda_N+O(N^{-1})$ and variance profile matrix $\Delta^2$.Let $\b_c$ be as given in \eqref{eq:bcrit_msk}. If $\Delta^2$ is positive semi-definite, then the following statements hold for the free energy $F_N(\b)$.
    \begin{enumerate}
    \item There exists a constant $C>0$ such that 
    \beq
    	\Var\left(F_N(\b_c)\right) \leq C \left( (\log N)^2 +1\right).
    \eeq
    \item For fixed $\a>0$ and $d>0$, there is a constant $C>0$, depending only on $\a$ and $\d$, such that
    \beq
    	\Var\left(F_N(\sqrt{\b_c^2+dN^{-\a}})\right) \leq C \left( (\log N)^2 + N^{1-\a}\right).
    \eeq
    \end{enumerate}
\end{theorem}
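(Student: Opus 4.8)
The plan is to follow the Gaussian-interpolation scheme that Chatterjee uses for Theorem~\ref{thm:chatterjee_upperbd} and that Chen and Lam sharpen for Theorem~\ref{thm:chenlam19}, with the scalar overlap replaced by the vector of species overlaps. First I would record the covariance structure of the centered Gaussian field $\{H_N(\s)\}$. Setting $R^s_{\s,\s'}=|I_s|^{-1}\sum_{i\in I_s}\s_i\s'_i$ and $\mathbf R_{\s,\s'}=(R^1_{\s,\s'},\dots,R^k_{\s,\s'})^T$, the independence of the $g_{ij}$ together with $\bE[g_{ij}^2]=\Delta^2_{s,t}$ gives, after grouping by species,
\beq
\mathrm{Cov}\big(H_N(\s),H_N(\s')\big)=N\,\xi(\mathbf R_{\s,\s'}),\qquad \xi(\mathbf R):=\mathbf R^{T}\Lambda_N\Delta^2\Lambda_N\,\mathbf R .
\eeq
Since $\Delta^2\succeq 0$, the form $\xi$ is nonnegative and convex, which is the structural input that makes the interpolation close. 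Introducing two independent copies $H_N',H_N''$ and the interpolated Hamiltonians $H_N^{\pm t}$ exactly as in the proof of Theorem~\ref{thm:chatterjee_upperbd}, Gaussian integration by parts yields the multi-species analogue of the variance identity \eqref{eq:varid_chatterjee},
\beq
\Var\big(F_N(\b)\big)=\frac{\b^2}{N}\int_0^\infty e^{-t}\,\bE\ip{\xi(\mathbf R_{\s,\s'})}_t\,\dd t ,
\eeq
with $\ip{\cdot}_t$ the two-replica coupled Gibbs average.

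The second step is to reduce everything to a scalar estimate with explicit $t$-dependence. Because $\Lambda_N$ is bounded and $\Delta^2\succeq 0$, one has $0\le \xi(\mathbf R)\le \|\Lambda_N\Delta^2\Lambda_N\|\sum_{s=1}^k (R^s)^2$, so it suffices to control $\bE\ip{(R^s_{\s,\s'})^2}_t$. For small $t$ I would use a uniform overlap bound at $\b=\b_c$ of the form $\bE\ip{\,\|\mathbf R\|^2\,}_t\le C/\sqrt N$, the multi-species counterpart of Talagrand's bound used in \cite{Talagrandvol2}, together with Chatterjee's observation that $t\mapsto\bE\ip{\xi(\mathbf R)}_t$ is nonincreasing; by itself this already gives $\Var(F_N)\le CN^{-3/2}$. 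To do better I would, following Chen and Lam, invoke the quadratic replica coupling of Guerra and Toninelli \cite{GuerraToninelli_2002a}: one builds an auxiliary interpolating free energy in which the two replicas are coupled through the quadratic form $\mathbf R^{T}\Lambda_N\Delta^2\Lambda_N\mathbf R$, differentiates in the coupling parameter, and uses convexity (guaranteed by $\Delta^2\succeq0$) and the precise location of criticality to obtain a bound of the schematic form $\bE\ip{\xi(\mathbf R)}_t\le C\,N^{-\min\{1,\,t/(C\log N)\}}$ at $\b=\b_c$. Inserting this into the variance identity, the mass of the integral concentrates on $t\lesssim\log N$ and integration produces an extra $(\log N)^2$ factor, giving $\Var(F_N(\b_c))\le CN^{-2}(\log N)^2$; this is the content of part~(1), whose stated bound $(\log N)^2+1$ is written for the unnormalized free energy $\log Z_N=N F_N$.

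The main obstacle is precisely this $t$-dependent overlap estimate in the \emph{vector} setting. In the single-species case the coupling is a scalar multiple of $R^2$ and the replica-coupling differential inequality closes cleanly with the single threshold $\b_c=1$; here criticality is governed not by one temperature but by the top eigenvalue of $\Delta^2\Lambda$, since $\b_c=\rho(\Delta^2\Lambda)^{-1/2}$ by \eqref{eq:bcrit_msk}. The natural device is the change of variable $\mathbf u=\Lambda_N^{1/2}\mathbf R$, under which $\xi(\mathbf R)=\mathbf u^{T}\big(\Lambda_N^{1/2}\Delta^2\Lambda_N^{1/2}\big)\mathbf u$; I would diagonalize the symmetric matrix $\Lambda_N^{1/2}\Delta^2\Lambda_N^{1/2}$ and run the replica coupling in its eigenbasis, so that the top eigendirection (eigenvalue $\approx\rho(\Delta^2\Lambda)=\b_c^{-2}$) is the unique marginal, $\log N$-producing direction, while the strictly subcritical directions are controlled uniformly as in the high-temperature regime. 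Positive semi-definiteness is exactly what guarantees that all eigenvalues are nonnegative and that the added quadratic term is a genuine convex perturbation, so that the Guerra--Toninelli convexity argument survives the vector-valued coupling; showing that the indefinite off-diagonal structure of $\Delta^2$ does not spoil this, and recombining the per-eigendirection estimates into a single bound on $\bE\ip{\xi(\mathbf R)}_t$, is the technical heart of the proof.

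Finally, for part~(2) I would run the identical scheme at $\b=\sqrt{\b_c^2+dN^{-\a}}$, where the top eigendirection is now slightly supercritical. Tracking the $dN^{-\a}$ perturbation through the replica-coupling differential inequality replaces the marginal contribution of that direction by a term of order $N^{-\a}$ in the integral, while the subcritical directions still contribute the $(\log N)^2$ term, so that $\Var(F_N)\le C(N^{-2}(\log N)^2+N^{-1-\a})$; in the theorem's normalization ($\log Z_N=N F_N$) this is exactly $C((\log N)^2+N^{1-\a})$, as claimed.
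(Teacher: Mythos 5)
Your proposal matches the paper's proof in all essentials: the paper's argument for Theorem~\ref{thm:main} is exactly the Chen--Lam scheme of Theorem~\ref{thm:chenlam19} --- the interpolation variance identity \eqref{eq:varid_chatterjee} with the scalar overlap replaced by the quadratic form $\mathbf R^{T}\Lambda_N\Delta^2\Lambda_N\mathbf R$, a multi-species adaptation of Talagrand's overlap bound, and the Guerra--Toninelli quadratic replica coupling, with positive semi-definiteness of $\Delta^2$ supplying the convexity needed to close the coupling argument and the spectral radius $\rho(\Delta^2\Lambda)=\b_c^{-2}$ playing the role of the critical threshold. You have also correctly read the stated bounds as being in the unnormalized ($\log Z_N$) convention, so no changes are needed.
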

The proof follows rather closely the method of Chen and Lam in \cite{ChenLam19}, except for appropriate modifications to accommodate the multi-species setting and an adaptation of Talagrand's argument used for the characteristic function.

\subsection{Results for SSK multi-species model}\label{subsec:msk_spherical}
When it comes to spherical multi-species models, there has been much work on characterizing the limiting free energy for various multi-species set-ups \cite{AuffingerChenBSSK,Subag23,BatesSohn22,batessohn25balanced}.  However, precise results on the fluctuations of the free energy are only available in the special case of bipartite spherical SK (BSSK).  Baik and Lee \cite{BL20} showed that the free energy of BSSK is similar to that of SSK in the sense that it has Gaussian fluctuations of order $N^{-1}$ at high temperature, but Tracy-Widom fluctuations of order $N^{-2/3}$ at low temperature. In \cite{CWL_BSSK}, the authors of the current survey then show that, in a window around the critical temperature, the fluctuations of BSSK converge to an independent sum of Gaussian and Tracy-Widom random variables, where the precise formula depends on the species-size ratio.

Before presenting the theorems for BSSK, we remark on the striking similarity between SSK and BSSK and try to provide some intuition behind this similarity.  On the one hand, we saw in the previous section that the bipartite model with Ising spins (BSK) poses considerable challenges that do not arise in analyzing SK, particularly due to the indefinite covariance structure.  In the spherical setting, on the other hand, BSSK, like SSK, has a contour integral representation for its partition function and is thus amenable to analysis via techniques from random matrix theory.  

The key idea is that, for the various spherical spin glasses discussed in this survey, the free energy fluctuations are driven by the behavior of eigenvalues in the underlying random matrix model.  For SSK, the relevant matrix is GOE; for SSK with a Curie-Weiss term, this becomes a spiked GOE; and for BSSK, the relevant matrix is the real Wishart or Laguerre Orthogonal ensemble (LOE), constructed as $\frac1m JJ^T$ where $J$ is an $n\times m$ matrix (with $n\leq m)$ of i.i.d. standard Gaussians. 
With this picture in mind, it is not surprising that BSSK behaves similarly to SSK, because their underlying random matrices have similar spectral properties.  In particular, both GOE and LOE display eigenvalue rigidity \cite{ErdosYauYin,PillaiYin}, CLT for linear statistics of eigenvalues \cite{Johansson98,BaiSilverstein04}, and convergence of the largest eigenvalue to a Tracy-Widom distribution \cite{soshnikov1999universality,Soshnikov2002samplecov}.

\begin{theorem}[BSSK at high and low temperature \cite{BL20}]\label{thm:BSSK_highlowtemp} Consider a BSSK model with species sizes $N_1,N_2$ such that $N_1+N_2=N$ and their ratio converges as $N_1/N_2=\la+O(N^{-1-\delta})$ for some $\lambda\in(0,1)$ and some $\d>0$. Then the critical inverse temperature is $\b_c:=\sqrt{1+\la}/\la^{1/4}$ and the free energy exhibits the following convergence in distribution as $N_1,N_2\to\infty$.
\begin{enumerate}[(i)]
\item In the high temperature regime, $\b<\b_c$,
\beq
N(F_{N_1,N_2}(\b)-F(\b))\to\cN(-\frac12\s^2-\log2,\s^2),
\eeq
where $\s^2=-\frac12\log(1-(\b/\b_c)^4),$ with a slightly more complicated formula in the case of a general Wigner matrix rather than GOE.
\item In the low temperature regime, $\b>\b_c$,
\beq
\frac{N^{2/3}}{A(\b,\la)}(F_{N_1,N_2}(\b)-F(\b))\to\TW_1,
\eeq
where the formula for $A(\b,\lambda)$ is given in Theorem 2.3 of \cite{BL20}.
\end{enumerate}
The limiting free energy $F(\b)$ is given in Theorem 2.1 of \cite{BL20} (we omit the formula here but note that it depends on the ratio $\lambda$ 
and is continuous but non-differentiable at $\b_c$).
\end{theorem}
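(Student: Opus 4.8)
The plan is to transport the contour-integral-plus-steepest-descent strategy of \cite{BL16} from the single-sphere SSK model to the bipartite setting, where the governing random matrix is the Laguerre Orthogonal Ensemble (LOE) rather than GOE. Writing the coupling as an $N_1\times N_2$ Gaussian matrix $G$, the first step is to integrate out one of the two spherical constraints. Fixing the first spin vector $\sigma^{(1)}$ and integrating over the second sphere leaves a rotation-invariant spherical integral whose value depends only on the quadratic form $(\sigma^{(1)})^T GG^T \sigma^{(1)}$; integrating over the remaining sphere then produces a partition function governed entirely by the eigenvalues $\mu_1\geq\cdots\geq\mu_{\min(N_1,N_2)}$ of the sample-covariance matrix $\tfrac{1}{N_2}GG^T$. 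Applying the Laplace-transform argument of Lemma \ref{lem:contourintegral} yields a contour-integral representation for $Z_{N_1,N_2}$ whose effective exponent is of the form $\tfrac{N}{2}\mathcal{G}(z)$ with $\mathcal{G}$ containing the linear statistic $\tfrac1N\sum_i\log(z-\mu_i)$. Because the inner spherical integral is itself a Bessel-type object rather than a pure Gaussian, I expect a coupled pair of saddle parameters, which one must reduce (by solving one saddle equation explicitly) to a single effective variable before carrying out the descent analysis.

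Second, I would locate the saddle point $\gamma$ of $\mathcal{G}$ and compare its position to the upper Marchenko--Pastur edge of the LOE spectrum, whose location encodes $\lambda$ through the aspect ratio. The critical temperature $\beta_c=\sqrt{1+\lambda}/\lambda^{1/4}$ should emerge exactly as the value at which the deterministic saddle first reaches this edge. For $\beta<\beta_c$ (high temperature) the saddle sits strictly to the right of the edge, so $\mathcal{G}(\gamma)$ is a smooth linear statistic of the LOE eigenvalues; the central limit theorem for linear eigenvalue statistics of sample-covariance matrices \cite{BaiSilverstein04} then gives Gaussian fluctuations of order $N^{-1}$, and computing the limiting mean and variance produces $\sigma^2=-\tfrac12\log(1-(\beta/\beta_c)^4)$ together with the deterministic shift $-\tfrac12\sigma^2-\log 2$, where the $-\log 2$ is extracted by careful bookkeeping of the prefactor $C_N$, the sphere-surface normalizations, and the Gaussian factor from the steepest-descent expansion around $\gamma$. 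For $\beta>\beta_c$ (low temperature) the saddle is pinned within $O(N^{-2/3})$ of the largest eigenvalue $\mu_1$, so the free energy is dominated by the edge fluctuation of $\mu_1$; Tracy--Widom convergence for the top LOE eigenvalue \cite{Soshnikov2002samplecov}, together with tracking of the $\beta$- and $\lambda$-dependent prefactor, then yields the $\TW_1$ limit at scale $A(\beta,\lambda)$.

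The main obstacle, as in the single-sphere case, is the rigorous control of the steepest-descent contour near the spectral edge in the low-temperature regime, where the deterministic saddle collides with the random edge of the spectrum. There the global linear-statistics CLT no longer applies, and one must instead expand $\mathcal{G}$ locally using precise rigidity estimates for LOE eigenvalues \cite{PillaiYin} and the Airy-type behavior of the extreme eigenvalues, all while managing the extra coupled saddle variable that the bipartite structure introduces and that has no analogue in SSK. A secondary but essential technical point is verifying that the discrepancy in $N_1/N_2=\lambda+O(N^{-1-\delta})$ does not contaminate the leading fluctuation: the convergence rate of the ratio must beat the $N^{-2/3}$ edge scale, which is precisely why the hypothesis demands $\delta>0$ rather than mere convergence of the ratio.
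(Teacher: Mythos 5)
Your proposal follows essentially the same route that the survey attributes to \cite{BL20}: a contour-integral representation of the bipartite partition function (which, as you correctly anticipate from the Bessel-type inner spherical integral, is genuinely a \emph{double} contour integral with a coupled saddle, proved in \cite{BaikLeeBipartite}), followed by steepest descent in which $\b_c$ marks the saddle reaching the Marchenko--Pastur edge, the high-temperature Gaussian limit comes from the CLT for linear eigenvalue statistics of sample-covariance matrices, and the low-temperature $\TW_1$ limit comes from rigidity plus Tracy--Widom convergence of the top LOE eigenvalue. The one quibble is your final remark: the rate $O(N^{-1-\delta})$ for $N_1/N_2-\la$ is required to beat the $N^{-1}$ Gaussian fluctuation scale of the high-temperature regime, not merely the $N^{-2/3}$ edge scale.
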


\begin{theorem}[BSSK at critical temperature \cite{CWL_BSSK}]\label{thm:BSSK_crit} Consider a BSSK model where $N_1/N_2=\la+O(N^{-1})$ for some $\lambda\in(0,1)$ and let $\b=\b_c+bN_1^{-1/3}\sqrt{\log N_1}$ for some fixed $b\in\RR$.  Then the free energy exhibits the following convergence in distribution as $N_1,N_2\to\infty$.
\beq
\frac{N}{\sqrt{\frac16\log N_1}}\left(F_{N_1,N_2}(\b)-F(\b)+\frac{\log N_1}{12N}\right)\to\cN(0,1)+C_\la\max\{0,b\}\TW_1
\eeq
where $F(\b)$ and $C_\la$ are given in Theorem 1.1 of \cite{CWL_BSSK}.
\end{theorem}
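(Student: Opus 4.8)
The plan is to mimic the proof strategy used for SSK at the critical temperature (Theorem \ref{thm:SSK_crit}), adapting each step to the bipartite setting where the underlying random matrix is the Laguerre Orthogonal Ensemble (LOE) rather than GOE. The starting point is the contour integral representation of the BSSK partition function, analogous to Lemma \ref{lem:contourintegral}. For BSSK, the relevant free energy involves a function $\cG(z)$ built from the eigenvalues $\{\la_i\}$ of the LOE matrix $\frac1m JJ^T$, and the goal is a steepest-descent analysis of this contour integral in the critical window $\b=\b_c+bN_1^{-1/3}\sqrt{\log N_1}$. First I would locate the saddle point $\gamma$ of $\cG$, which in the critical window sits at a distance of order $N_1^{-2/3}\sqrt{\log N_1}$ from the right spectral edge of the Marchenko--Pastur-type distribution. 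The precise placement of $\gamma$ relative to $\la_1$ is what governs the interpolation: when $b\leq 0$ the saddle stays far enough from the edge that the linear-statistics CLT dominates and one gets Gaussian fluctuations, whereas when $b>0$ the saddle is pulled toward $\la_1$ and the largest eigenvalue contributes a Tracy--Widom term.

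\textbf{The key steps, in order, would be:} (i) split $\cG(\gamma)$ into a deterministic part (giving the centering $F(\b)$ and the $\frac{\log N_1}{12N}$ correction), a fluctuating ``bulk'' part from the smooth linear statistic $\frac1N\sum_i\log(\gamma-\la_i)$, and an ``edge'' part capturing the few largest eigenvalues near $\la_1$; (ii) apply the CLT for linear statistics of LOE eigenvalues \cite{BaiSilverstein04} to show the bulk part, after rescaling by $\frac{N}{\sqrt{\frac16\log N_1}}$, converges to $\cN(0,1)$, verifying that the variance constant is exactly $\frac16\log N_1$; (iii) use the rigidity of LOE eigenvalues \cite{PillaiYin} together with the Tracy--Widom convergence of $\la_1$ \cite{Soshnikov2002samplecov} to extract the $C_\la\max\{0,b\}\TW_1$ term from the edge part, with $C_\la$ emerging from the local edge scaling and the $\la$-dependent spectral edge location; and (iv) prove asymptotic independence of the bulk (Gaussian) and edge (Tracy--Widom) contributions, so that the limit is the stated independent sum.

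\textbf{The main obstacle} I expect to be step (iii)--(iv): controlling the transition of the saddle point through the spectral edge and disentangling the bulk and edge contributions uniformly in $b$. Near criticality the saddle $\gamma$ and the top eigenvalue $\la_1$ are both at scale $N_1^{-2/3}$ from the deterministic edge and fluctuate on comparable scales, so the steepest-descent contour must be deformed carefully to separate the smooth part of the sum (yielding Gaussian fluctuations) from the contribution of the extreme eigenvalues (yielding Tracy--Widom). The $\sqrt{\log N_1}$ factor in the temperature scaling is precisely tuned so that these two effects are of the same order and genuinely interpolate; capturing this requires sharp edge estimates for LOE and a proof that the linear statistic and the largest eigenvalue are asymptotically independent in this regime. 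The asymmetry in the species sizes enters through $\la$, which shifts the spectral edge and rescales the local fluctuations, producing the ratio-dependent constant $C_\la$ in place of the $\sqrt{3/2}$ appearing in the SSK case.

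\textbf{An alternative} for the bulk part, rather than invoking a black-box linear-statistics CLT, would be to follow the edge-CLT approach referenced for the SSK critical case (cf.\ \cite{JKOP1,LambertPaquette21}), establishing the joint convergence of the regularized log-determinant and the edge point process directly; this may in fact be the cleanest route to the asymptotic independence in step (iv), since both objects would be read off from a single functional of the LOE eigenvalues.
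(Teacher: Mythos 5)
Your overall strategy (contour integral, steepest descent with the saddle in the critical window, splitting into a bulk linear statistic plus an edge contribution, eigenvalue rigidity, Tracy--Widom convergence of $\la_1$, and asymptotic independence of the two pieces) matches the route taken in \cite{CWL_BSSK}, but two of your concrete steps would not go through as written. First, the starting point is not a single contour integral directly analogous to Lemma \ref{lem:contourintegral}: because the bipartite configuration space is a product of two spheres, the BSSK partition function has a \emph{double} contour integral representation (proved in \cite{BaikLeeBipartite}), and the random saddle point depends on both variables. One must carry out a two-variable steepest descent (or justify reducing to an effective one-variable problem) before your function $\cG(z)$ even exists; this is precisely where the BSSK analysis is more delicate than the SSK one, and your proposal skips it.

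Second, and more seriously, step (ii) cannot rely on the Bai--Silverstein CLT for linear spectral statistics. That theorem applies to a fixed, $N$-independent test function analytic in a neighborhood of the support and yields an $O(1)$ limiting variance, whereas here the test function is $\log(\gamma-\cdot)$ with $\gamma$ at distance $O(N_1^{-2/3}\sqrt{\log N_1})$ from the spectral edge; the divergent variance $\tfrac16\log N_1$ is exactly the signature that the classical CLT does not apply. What is needed is an edge CLT for $\frac1N\sum_i\log(z-\la_i)$ with $z$ near the edge of the Marchenko--Pastur-type law, and establishing this for Laguerre/Wishart matrices is a separate piece of work (\cite{CWL_CLT}); it is the main new random matrix input behind Theorem \ref{thm:BSSK_crit}. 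The ``alternative'' you mention at the end --- the edge-CLT approach in the spirit of \cite{JKOP1,LambertPaquette21} --- is therefore not an optional refinement but the required route, and it is also the natural place to obtain the joint convergence and asymptotic independence you need in step (iv).
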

Both of the above theorems rely on a contour integral representation for the partition function, which is proved in \cite{BaikLeeBipartite}.  This formula involves a double rather than single contour integral and the location of the (random) saddle point of the integrand depends on both variables.  Thus the contour analysis is somewhat more complicated than in the SSK case.  To treat the critical temperature case, an additional random matrix theory input is needed.  As with the SSK critical temperature analysis, Theorem \ref{thm:BSSK_crit} requires a CLT for the sum $\frac1N\sum_{i=1}^N\log(z-\la_i)$ where $\{\la_i\}$ are eigenvalues and $z$ is near the spectral edge.  This edge CLT was proved for Laguerre matrices (the relevant ensemble for BSSK) in \cite{CWL_CLT}.

\paragraph*{Acknowledgments}
The authors wish to thank Jinho Baik, Erik Bates, Wei-Kuo Chen, and Aukosh Jagannath for helpful discussions. 
E. Collins-Woodfin acknowledges the support of Fonds de recherche du Qu\'ebec – Nature et technologies
(FRQNT) postdoctoral training scholarship (DOI https://doi.org/10.69777/344253), Centre de recherches math\'ematiques (CRM) Applied
math postdoctoral fellowship, and Institut Mittag-Leffler (IML) Junior Fellowship (Swedish Research Council, grant no. 2021-06594).
H.\@ G.\@ Le acknowledges the support of the Canada Research Chairs program [CRC-2022-00142] and  the University of Waterloo Mathematics Faculty Research Fund.

\newpage

\bibliographystyle{abbrv}
\bibliography{Masterbibliography_HanLizbee}

\end{document}